\documentclass[english,letterpaper,11pt,reqno]{amsart}
\setlength{\parindent}{0pt}  
\usepackage{appendix}
\usepackage{amsbsy}
\usepackage{scalerel}
\usepackage{tcolorbox}
\usepackage{amsfonts}
\usepackage{amsmath}
\usepackage{amssymb}
\usepackage{amsthm}
\usepackage{graphicx}
\usepackage{ifthen}
\usepackage{textcomp}
\usepackage{enumitem, color, amssymb}
\usepackage{dsfont}
\usepackage{mathrsfs}
\usepackage{calrsfs}
\usepackage[bookmarksnumbered,colorlinks]{hyperref}
\hypersetup{colorlinks=true, linkcolor=blue, citecolor=blue}
\emergencystretch20pt   
\usepackage{cancel}
\usepackage{setspace}

\newcommand{\lra}{\longrightarrow}

\newcommand{\RR}{\mathbb{R}}

\newcommand{\vepo}{\epsilon_{\scalebox{0.4}{\emph{L}}}}

\makeatletter
\newcommand*{\defeq}{\mathrel{\rlap{%
                     \raisebox{0.25ex}{$\m@th\cdot$}}%
                     \raisebox{-0.25ex}{$\m@th\cdot$}}%
                     =}
\makeatother

\makeatletter
\newcommand*\owedge{\mathpalette\@owedge\relax}
\newcommand*\@owedge[1]{%
  \mathbin{%
    \ooalign{%
      $#1\m@th\bigcirc$\cr
      \hidewidth$#1\m@th\wedge$\hidewidth\cr
    }%
  }%
}
\makeatother

\newtheorem{thm}{Theorem}
\newtheorem{lemma}{Lemma}

\newtheorem{defn}{Definition}
\newtheorem{prop}{Proposition}
\newtheorem*{definition-non}{Definition}
\newtheorem*{theorem-non}{Theorem}
\newtheorem*{proposition-non}{Proposition}
\newtheorem*{lemma-non}{Lemma}
\newtheorem*{corollary-non}{Corollary}

\newcommand{\beqa}{\begin{eqnarray}}
\newcommand{\beq}{\begin{equation}}
\newcommand{\eeqa}{\end{eqnarray}}
\newcommand{\eeq}{\end{equation}}

\newcommand\ipl[2]{\langle {#1},{#2}\rangle_{\!g_{\scalebox{0.3}{\emph{L}}}}}
\newcommand\ipr[2]{\langle {#1},{#2}\rangle_{\!\scalebox{0.7}{\emph{g}}}}
\newcommand\iph[2]{\langle {#1},{#2}\rangle_{\!\scalebox{0.7}{\emph{h}}}}


\newcommand\ww[2]{#1 \wedge #2}
\newcommand\imp{\hspace{.2in}\Rightarrow\hspace{.2in}}
\newcommand\PL{P^{\perp_{\scalebox{0.35}{\emph{L}}}}}
\newcommand\Ph{P^{\perp_{\scalebox{0.5}{\emph{h}}}}}

\newcommand\cd[2]{\nabla_{\!#1}{#2}}

\newcommand\gL{g_{\scalebox{0.4}{\emph{L}}}}

\newcommand\Ric{\text{Ric}_{\scalebox{0.6}{\emph{g}}}}

\newcommand\comma{\hspace{.2in},\hspace{.2in}}
\newcommand\commas{\hspace{.1in},\hspace{.1in}}

\newcommand\hsl{*_{\scalebox{0.4}{\emph{L}}}}
\newcommand\hsh{*_{\scalebox{0.5}{\emph{h}}}}

\newcommand\hsr{*_{\scalebox{0.5}{\emph{g}}}}

\newcommand\col{\hat{R}_{\scalebox{0.4}{\emph{L}}}}
\newcommand\cowl{\hat{W}_{\scalebox{0.4}{\emph{L}}}}
\newcommand\cow{\hat{W}}
\newcommand\cx{\Lambda_{\scalebox{0.5}{\emph{$\mathbb{C}$}}}^2}
\newcommand\Tsec{\text{sec}_{\scalebox{0.4}{\emph{$L$-$\hat{R}$}}}}

\newcommand\Ssec{\text{sec}_{\scalebox{0.4}{\emph{$L$-$\hat{S}$}}}}
\newcommand\gsec{\text{sec}_{\scalebox{0.6}{\emph{g}}}}
\newcommand\co{\hat{R}_{\scalebox{0.6}{\emph{g}}}}
\newcommand\cW{\hat{W}_{\scalebox{0.6}{\emph{g}}}}

\newcommand\cooh{{\hat{R}^{\scalebox{0.5}{\emph{h}}}}_{\scalebox{0.4}{\emph{L}}}}

\newcommand\cS{\hat{S}_{\scalebox{0.6}{\emph{g}}}}
\newcommand\cA{\hat{A}_{\scalebox{0.6}{\emph{g}}}}

\providecommand{\customgenericname}{}
\newcommand{\newcustomtheorem}[2]{%
  \newenvironment{#1}[1]
  {%
   \renewcommand\customgenericname{#2}%
   \renewcommand\theinnercustomgeneric{##1}%
   \innercustomgeneric
  }
  {\endinnercustomgeneric}
}

\newcustomtheorem{customthm}{Theorem}
\newcustomtheorem{customlemma}{Lemma}


\begin{document}
\title[]{On the Petrov Type of a 4-manifold}
\author[]{Amir Babak Aazami}
\address{Clark University\hfill\break\indent
Worcester, MA 01610}
\email{aaazami@clarku.edu}

\maketitle
\begin{abstract}
On an oriented 4-manifold, we examine the geometry that arises when the curvature operator of a Riemannian or Lorentzian  metric $g$ commutes, not with its own Hodge star operator, but rather with that of another semi-Riemannian metric $h$ that is a suitable deformation of $g$.  We classify the case when one of these metrics is Riemannian and the other Lorentzian by generalizing the concept of Petrov Type from general relativity; the case when $h$ is split-signature is also examined. The ``generalized Petrov Types" so obtained are shown to relate to the critical points of $g$'s sectional curvature, and sometimes yield unique normal forms.  They also carry topological information independent of the Hitchin-Thorpe inequality, and yield a direct geometric formulation of ``almost-Einsten" metric via the Ricci or sectional curvature of $g$.
\end{abstract}

\section{Introduction}
The Petrov classification of 4-dimensional spacetimes, due to A. Petrov \cite{petrov}, is a foundational result in gravitational physics, a classification of Lorentzian metrics by their Weyl curvature tensors; see \cite{o1995, stephani2009} for modern treatments.  Somewhat less well known is its reformulation due to J. Thorpe \cite{thorpe}, who showed that a spacetime's ``Petrov Type" is in fact the Lorentzian analogue of the ``normal form" of an oriented 4-dimensional Riemannian manifold.  The latter refers to those Riemannian 4-manifolds $(M,g)$ whose curvature operator $\co\colon \Lambda^2\lra \Lambda^2$ is determined by knowledge of just the critical points and values of its sectional curvature $\gsec$, when the latter is viewed as the quadratic form of $\co$.  The most well known examples of such 4-manifolds, discovered independently by M. Berger \cite{berger} and I. Singer \& Thorpe \cite{thorpe2}, are Einstein metrics; i.e., those Riemannian metrics $g$ whose Ricci tensor $\Ric$ satisfies $\Ric  = \lambda g$ for some $\lambda \in \RR$.  The reason for this is that in dimension 4, the Einstein condition arises precisely when $g$'s Hodge star operator $*\colon \Lambda^2 \lra \Lambda^2$ commutes with $\co$:
\beqa
\label{eqn:comm1}
\co \circ * = * \circ \co.
\eeqa
As shown in \cite{thorpe}, this commutativity is also the origin of Petrov Type, but with the Weyl curvature in place of $\co$.  Given this common origin, the purpose of this article is to generalize \eqref{eqn:comm1} by examining the geometry that arises when $\co$ and $*$ arise from \emph{different metrics} on $M$; i.e., when the curvature operator $\co$ of a Riemannian or Lorentzian metric commutes, not with its own Hodge star $*$, but rather with that of another semi-Riemannian metric $h$ that is a suitable deformation of $g$.  Semi-Riemannian here means that $h$ can be a Riemannian, a Lorentzian, or a ``split-signature" metric; i.e., a metric of signature $(2,2)$.  As we will show, in each of these cases there is geometric and sometimes even topological information contained in the symmetric or ``Weyl-like part" of $\co$, that is to say, in that portion of $\co$ that commutes with $h$'s Hodge star operator $\hsh$:
\beqa
\label{eqn:Weylpart}
\cS \defeq \frac{1}{2}(\co \pm \hsh \circ \co \circ \hsh) \imp \cS \circ \hsh = \hsh \circ \cS.
\eeqa
(If $\hsh$ were replaced by $*$ here, then $\cS = \cW + \frac{\text{scal}_{\scalebox{0.5}{\emph{g}}}}{12}I$, where $\cW$ is the Weyl curvature operator of $g$ and $\text{scal}_{\scalebox{0.5}{\emph{g}}}$ is its scalar curvature.) What we mean by ``suitable deformation of $g$" is simply that $h$ has orthonormal frames in common with $g$. For such $h$, we prove the following:
\begin{enumerate}[leftmargin=*]
\item[i.] When $g$ is Riemannian and $h$ Lorentzian, we take the latter to be of the form $h \defeq g - 2T^{\flat} \otimes T^\flat$ with $T$ a $g$-unit length vector field on $M$.  For such $h$, we then proceed to generalize the notion of Petrov Type by classifying the $\cS$'s in \eqref{eqn:Weylpart} analogously to \cite{thorpe}.  As shown in Theorems \ref{thm:JordanP} and \ref{thm:SS}, there will be three Petrov Types, given by the Jordan normal forms of $\cS$ when the latter is viewed as a complex-linear map via the complex multiplication on $\Lambda^2$ provided by $\hsh^2 = -1$.
\item[ii.] The best case is when $\cS = \co$ (i.e., when $\co$ itself commutes with $\hsh$).  It yields a notion of ``almost-Einstein" metric generalizing $\Ric = \lambda g$, whether $h$ is Lorentzian or split-signature.  While the latter case is more direct (Theorem \ref{thm:sech}), the former is also noteworthy (Theorems \ref{prop:psE} and \ref{thm:secsec}), because ``almost-Einstein" is the best one can do if $M$ is compact: There are no non-flat Einstein metrics when $M$ has Euler characteristic zero \cite{berger}, as it must if it is to admit a Lorentzian metric.  Even more, Theorems \ref{prop:psE} and \ref{thm:n} show that when $h$ is Lorentzian, $g$'s Petrov Type is related to the \emph{number} of critical points of $\gsec$, and that there is a further topological obstruction: The signature of $M$ must be zero, too.  (This is derived independently of the Hitchin-Thorpe inequality \cite{HT-thorpe,HT-hitchin}; i.e., without assuming the existence of an Einstein metric on $M$.)
\item[iii.] Finally, we examine the case when $g$ is Lorentzian and $h \defeq g + 2T^{\flat} \otimes T^\flat$ is Riemannian, or when both are Riemannian of any form. These yield the best results, provided one uses a variant of $\co$ that is self-adjoint with respect to $h$.  With this modification, Theorem \ref{thm:last} shows that there is a \emph{unique} normal form (Petrov Type), that it is determined entirely by the critical points and values of a certain quadratic form, and that it yields an ``almost-Einstein" as in ii. above.
\end{enumerate} 

\section{The Hodge star, normal forms, and Petrov Types}
\label{sec:Intro}
In this section we provide a brief overview as well as some historical remarks regarding normal forms and the Hodge star operator in both the Riemannian and Lorentzian settings.  The study of ``normal forms" for curvature operators is motivated by a well known fact from linear algebra: On any finite-dimensional inner product space $(V,\langle\,,\rangle)$, a self-adjoint linear transformation $T\colon V\lra V$ is determined by the critical points and values of its \emph{quadratic form} function, defined on the unit sphere of $V$ as follows:
\beqa
\label{eqn:la}
v \mapsto \langle Tv,v\rangle \comma |v| = 1.
\eeqa
The connection to Riemannian geometry is that the sectional curvature of a Riemannian manifold $(M,g)$ can also be realized as a quadratic form.  First, fix $p \in M$ and observe that the second exterior product $\Lambda^2 \defeq \Lambda^2(T_pM)$ inherits an inner product $\ipr{\,}{}$ from $g$ as follows: For all $v_1,v_2,w_1,w_2 \in T_pM$,
\beqa
\label{eqn:ip}
\ipr{\ww{v_1}{w_1}}{\ww{v_2}{w_2}}  \defeq \text{det}\begin{bmatrix}
g(v_1,v_2) & g(v_1,w_2)\\
g(w_1,v_2) & g(w_1,w_2)
\end{bmatrix}\cdot
\eeqa
With this in hand, we may express the Riemann curvature 4-tensor $R$ as a linear map $\co\colon \Lambda^2 \lra \Lambda^2$ called the \emph{curvature operator}, whose action $v \wedge w \mapsto \co(v\wedge w)$ is defined to be the unique 2-vector satisfying
\beqa
\label{eqn:minus0}
\ipr{\co(v\wedge w)}{x\wedge y} \defeq -R(v,w,x,y)\hspace{.2in}\text{for all $x,y \in T_pM$}.
\eeqa
Owing to the symmetry $R_{ijkl} = R_{klij}$, observe that $\co$ is $\ipr{\,}{}$-self-adjoint. It is with respect to $\co$ that the sectional curvature of $g$ is a quadratic form.  Indeed, for any orthonormal pair $v,w \in T_pM$, the sectional curvature $\gsec$ of the 2-plane $v\wedge w$ is
\beqa
\label{sec:v0}
\gsec(v\wedge w) \defeq R(v,w,w,v) = \underbrace{\,\ipr{\co(v\wedge w)}{v\wedge w}\,}_{\text{``$\langle Tv,v\rangle$"}}.
\eeqa
(Our sign convention is $R(a,b,c,d) \defeq g(\cd{a}{\!\cd{b}{\,c}}-\cd{b}{\!\cd{a}{c}}-\cd{[a,b]}{c},d)$.) Given this analogy with \eqref{eqn:la}, it is thus natural to study $\co$ by studying the critical point behavior of $\gsec$\,---\,the goal being to classify those curvature tensors which are determined by knowledge of just the critical point structure of $\gsec$. Such metrics are then said to have a ``normal form."  (Observe that this problem is more difficult than \eqref{eqn:la} because not all unit 2-vectors in $\Lambda^2$ correspond to 2-planes in $T_pM$.  Rather, only those unit 2-vectors $\xi$ that are decomposable do; i.e., those that can be written as $\xi = \ww{v}{w}$ for $v,w \in T_pM$.)  As shown in \cite{thorpe2}, if one restricts attention to the class of Einstein metrics on  a smooth 4-manifold $M$, then such metrics are indeed determined solely by the critical point structure of $\gsec$. As mentioned in the Introduction, what makes this beautiful result possible is that, in dimension 4, Einstein metrics $g$ are exactly those that satisfy \eqref{eqn:comm1}, where the Hodge star operator $\hsr$ is defined as sending any 2-plane $v \wedge w$ to its $\ipr{\,}{}$-orthogonal complement.  The precise definition is
\beqa
\label{eqn:hs}
\ww{\xi}{\hsr\eta} \defeq \ipr{\xi}{\eta}\,dV \comma \xi,\eta \in \Lambda^2,
\eeqa
where $dV$ is the orientation form in the 1-dimensional space $\Lambda^4(T_pM)$.  (As we'll see below, these definitions would not change if we replaced the Riemannian metric $g$ with a Lorentzian metric $\gL$.)  Thanks to $\hsr$, there is in fact another way to express the Einstein condition in dimension 4  (see \cite{thorpe2}):
\beqa
\label{eqn:gsec_def}
\text{$g$ is Einstein $\iff \gsec(v\wedge w) = \gsec(\hsr(v\wedge w))$.}
\eeqa
I.e., $g$ is Einstein if and only if the sectional curvature of each 2-plane is equal to that of its orthogonal complement.  With these preliminaries out of the way, let us now comment briefly on what else has been done regarding normal forms in the Riemannian setting.  Continuing the study of the pointwise sectional curvature function $\gsec$, Thorpe analyzed its zero set when $\gsec \geq 0$ in \cite{thorpe4}, and also gave a condition for a 4-manifold to have $\gsec > 0$, in \cite{thorpe3}. Dimension 4 is particularly rich: Aside from Einstein metrics, the class of K\"ahler 4-manifolds with positive Ricci curvature also possesses a normal form, as shown in \cite{johnson}. In fact Johnson also found normal forms for 6-dimensional K\"ahler manifolds with positive sectional curvature, in \cite{johnson2}.  As one may imagine, however, in higher dimensions the behavior of $\gsec$ becomes more difficult to analyze in general, as shown in \cite{zoltek}; indeed, aside from the K\"ahler case in dimension 6, the author knows of only one other higher-dimensional result, in \cite{zoltek2}, who found normal forms for a special class of curvature operators in dimension 5.  Nevertheless, the study of sectional curvature related to and inspired by these works continues to the present day; e.g., on a variant of $\co$ known as the \emph{curvature operator of the second kind} (see, e.g., the recent works \cite{gursky,li}).  So, too, does the quest for normal forms\,---\,even in dimension 4.  Indeed, \cite[Prop.~2.4]{cao} recently showed that gradient Ricci 4-solitons, which satisfy
$$
\Ric + \text{Hess}_{\scalebox{0.5}{\emph{g}}} f = \lambda g
$$
for some smooth function $f$ on $M$ and $\lambda \in \RR$, also possess a normal form with respect to the operator $\co + \hat{H}_{\scalebox{0.6}{\emph{g}}}$, where $\hat{H}_{\scalebox{0.6}{\emph{g}}} \defeq \frac{1}{2}\text{Hess}_{\scalebox{0.5}{\emph{g}}} f {\footnotesize \owedge}\,g$.
\vskip 6pt
Now we turn to Lorentzian geometry.  First, recall that a \emph{Lorentzian metric} $\gL$ on a smooth manifold $M$ is a smooth nondegenerate metric with signature $(-\!+\!+\!\cdots+)$.  The absence of positive-definiteness implies that all nonzero vectors $X \in TM$ come in three flavors:
\beqa
\text{$X$ is}~\left\{\begin{array}{ccc}
\text{``spacelike"} &\text{if}& \text{$\gL(X,X) > 0$},\\
\text{``timelike"} &\text{if}& \text{$\gL(X,X) < 0$},\\
\text{``lightlike"} &\text{if}& \text{$\gL(X,X) = 0$}.
\end{array}\right.\nonumber
\eeqa
The Hodge star operator $\hsl$ of a Lorentzian 4-manifold $(M,\gL)$ is defined in perfect analogy with that of a Riemannian metric $g$ on $M$; however, owing to the different signature, let us analyze $\hsl$ with some care.  To begin with, in order to have orthonormal frames in common with $g$, our $\gL$'s will not be chosen arbitrarily, but rather as follows: For a suitable choice of $g$-unit length vector field $T$, let us form the metric
\beqa
\label{eqn:gL}
\gL \defeq g -2T^{\flat}\otimes T^{\flat},
\eeqa
where $T^{\flat} \defeq g(T,\cdot)$ is the one-form $g$-metrically equivalent to $T$.  Notice that $\gL(T,T) = -1$, so that $T$ is unit \emph{timelike} with respect to $\gL$.  \emph{Note also that any $g$-orthonormal basis containing $T$ is also a \emph{$\gL$}-orthonormal basis, and vice-versa}\,---\,a ``deformation of $g$" as we called it in the Introduction.  Generally speaking, the more ``distinguished" $T$ is\,---\,e.g., if it is closed, $dT^{\flat} = 0$, or if it is a Killing vector field, $\mathfrak{L}_Tg = 0$\,---\,the more similar the properties of $g$ and $\gL$ will be; see \cite{olea} for a careful treatment.  In any case, whatever the choice of $T$, if we take an  oriented local $\gL$-orthonormal frame $\{e_1,e_2,e_3,e_4\}$ with $e_1 \defeq T$, then the 2-vectors
\beqa
\label{eqn:Hbasis0}
\{\ww{e_1}{e_2}\,,\,\ww{e_1}{e_3}\,,\,\ww{e_1}{e_4}\,,\, \ww{e_3}{e_4}\,,\, \ww{e_4}{e_2}\,,\, \ww{e_2}{e_3}\}
\eeqa
will comprise an orthonormal basis for $\Lambda^2$ with respect to the Lorentzian inner product $\ipl{\,}{}$ on $\Lambda^2$, which is defined as $\ipr{\,}{}$ was in \eqref{eqn:ip}:
\beqa
\label{eqn:ipL}
\ipl{\ww{v_1}{w_1}}{\ww{v_2}{w_2}} \defeq \text{det}\begin{bmatrix}
\gL(v_1,v_2) & \gL(v_1,w_2)\\
\gL(w_1,v_2) & \gL(w_1,w_2)
\end{bmatrix}\cdot
\eeqa
(Note that the  first three basis elements in  \eqref{eqn:Hbasis0} are all timelike,
$$
\ipl{\ww{e_1}{e_i}}{\ww{e_1}{e_i}} = -1 \comma i=2,3,4,
$$
so that $\ipl{\,}{}$ has signature $(-\!-\!-\!+\!++)$.)  Now we can define the Hodge star operator $\hsl$ with respect to $\gL$, in perfect analogy with \eqref{eqn:hs}:
$$
\ww{\xi}{\hsl\eta} \defeq \ipl{\xi}{\eta}\,dV \comma \xi,\eta \in \Lambda^2.
$$
Bearing in mind that $\gL(e_1,e_1)  = -1$, observe that the action of $\hsl$ on the basis \eqref{eqn:Hbasis0} is
\beqa
\arraycolsep=1.4pt\def\arraystretch{1.5}
\left\{\begin{array}{lr}
\hsl(\ww{e_1}{e_2}) = -\ww{e_3}{e_4},\\
\hsl(\ww{e_1}{e_3}) = -\ww{e_4}{e_2},\\
\hsl(\ww{e_1}{e_4}) = -\ww{e_2}{e_3},
\end{array}\right.  \comma \arraycolsep=1.4pt\def\arraystretch{1.5}
\left\{\begin{array}{lr}
\hsl(\ww{e_3}{e_4}) = \ww{e_1}{e_2},\\
\hsl(\ww{e_4}{e_2}) = \ww{e_1}{e_3},\\
\hsl(\ww{e_2}{e_3}) = \ww{e_1}{e_4},\label{eqn:minus}
\end{array}\right.
\eeqa
or in block matrix form,
\beqa
\label{eqn:Hbasis}
\hsl = \begin{bmatrix}
O & I\\
-I & O
\end{bmatrix},
\eeqa
where $I$ is the $3\times 3$ identity matrix.  By contrast, the Riemannian Hodge star \eqref{eqn:hs}, with  respect to the same basis \eqref{eqn:Hbasis0}, is
\beqa
\label{eqn:oring}
\hsr = \begin{bmatrix}
O & I\\
I & O
\end{bmatrix}\cdot
\eeqa
The difference is that, while $*$ is an involution, splitting $\Lambda^2$ into the direct sum $\Lambda^2 = \Lambda^+ \oplus \Lambda^-$ of \emph{self-dual} ($*\xi = \xi$) and \emph{anti-self-dual} ($*\xi = -\xi$) eigenvenctors, $\hsl$ is not (even though $\hsl$ is $\ipl{\,}{}$-self-adjoint).  Indeed, $\hsl^2 = -1.$  However, the complex multiplication on $\Lambda^2$ that the latter defines will be crucial to our construction of ``Riemannian Petrov Type" below\,---\,just as it was in the construction of Petrov Type in \cite{thorpe} (see also \cite[pp.~98ff.]{besse} and \cite[Chapter~5]{o1995}).  Let us describe this now.  Because $\hsl^2 =  -1$, one loses the self-dual/anti-self-dual splitting of $\Lambda^2$, but gains a complex structure in the process, via $i \defeq \hsl$.  Then the commuting condition
$
\cowl \circ \hsl = \hsl \circ \cowl,
$
where $\cowl$ is the Weyl curvature operator of $\gL$\,---\,which, being trace-free, automatically commutes with $\hsl$\,---\,turns $\cowl$ into a \emph{complex-linear} map on $\cx$.  The ``Petrov Type of $\gL$" is then precisely the complex eigenstructure of $\cowl$.  This eigenstructure was shown in \cite{thorpe} to coincide with the \emph{number} of critical points of $\gL$'s sectional curvature function.   (E.g., the Petrov Type of vacuum black holes is equivalent to their sectional curvature functions having exactly one ``spacelike" critical point at any point on $M$.)  Therefore, by having a Riemannian $\co$ commute with a Lorentzian $\hsl$, we can realize complex eigenstructures for $\co$ and thus ``Riemannian Petrov Types."  (Cf. with \cite{apostolov97}, wherein it was shown that an oriented Einstein 4-manifold will admit a (locally) Hermitian structure if and only if its self-dual Weyl tensor ${\cow}^+ \defeq \frac{1}{2}(\cow + \cow \circ *)$ has at least two of its three eigenvalues equal.)  As we do so, the following is helpful to keep in mind:
\begin{defn}
\label{defn:Pstuff}
Let \emph{$(M,\gL)$} be an oriented Lorentzian 4-manifold.  At any $p  \in M$, let $P$ denote an oriented 2-dimensional subspace of $T_pM$.  Then $P$ is \emph{nondegenerate} if the restriction of \emph{$\gL$} to $P$, \emph{$\gL|_P$}, is nondegenerate. The \emph{sign of $P$}, denoted \emph{$\vepo(P) = \pm1$}, is defined to be $-1$ if \emph{$\gL|_P$} is Lorentzian and $+1$ if \emph{$\gL|_P$} is positive-definite.  The 2-plane \emph{$\gL$-orthogonal} to a nondegenerate $P$, denoted by \emph{$\PL$}, is defined to be \emph{$$\PL  \defeq \hsl P.$$}Finally, following \cite{thorpe}, let $G_+(p) \cup G_-(p) \subseteq \Lambda^2(T_pM)$ denote the 2-Grassmannians of all decomposable 2-vectors of length $\pm 1$, respectively\emph{;} i.e., the set of all oriented, nondegenerate 2-dimensional subspaces of $T_pM$.  Note that \emph{$\vepo(P) = \ipl{P}{P}$} for any $P \in G_{\pm}(p)$.
\end{defn}
($\PL$ should be distinguished from its Riemannian counterpart $P^\perp \defeq *P$.  Also, recall that $\xi \in \Lambda^2$ is decomposable $\Leftrightarrow \ww{\xi}{\xi} = 0 \Leftrightarrow \ipl{\xi}{\hsl\,\xi} = 0$.) We close this section by analyzing the matrix of $\co$ with respect to \eqref{eqn:Hbasis0}.  Denoting by ``$R_{ijkl}$" the components of the Riemann curvature 4-tensor of $g$, we have (recall the minus sign in \eqref{eqn:minus0})
{\footnotesize
\beqa
\label{eqn:co_sum}
\co = -\!\begin{bmatrix}
R_{1212} & R_{1312} & R_{1412} & R_{3412} & R_{4212} & R_{2312}\\
R_{1213} & R_{1313} & R_{1413} & R_{3413} & R_{4213} & R_{2313}\\
R_{1214} & R_{1314} & R_{1414} & R_{3414} & R_{4214} & R_{2314}\\
R_{1234} & R_{1334} & R_{1434} & R_{3434} & R_{4234} & R_{2334}\\
R_{1242} & R_{1342} & R_{1442} & R_{3442} & R_{4242} & R_{2342}\\
R_{1223} & R_{1323} & R_{1423} & R_{3423} & R_{4223} & R_{2323}
\end{bmatrix},
\eeqa}
which, owing to the symmetry $R_{ijkl} = R_{klij}$, has the block form
\beqa
\label{eqn:co2}
\co = -\!\begin{bmatrix}
A & B\\
B^t & D
\end{bmatrix},
\eeqa
with $A$ and $D$ symmetric $3\times 3$ matrices and $B^t$ the transpose of the $3\times3$ matrix $B$, which is not symmetric in general.  Now, in order to motivate our construction in Section \ref{sec:psm} below, recall that in the Riemannian setting $\Ric = \lambda g$ if and only if $\co \circ \hsr = \hsr \circ \co$
which, as we now see from \eqref{eqn:oring} and \eqref{eqn:co2}, is the case if and only if $A=D$ and $B^t = B$.  Combining this with \eqref{eqn:gsec_def}, we thus have that
\beqa
\label{eqn:p2}
\underbrace{\,\gsec(P) = \gsec(P^{\perp})\,}_{\text{for any 2-plane $P$}} \iff \co \circ \hsr = \hsr \circ \co \iff \co = -\!\begin{bmatrix}
A & B\\
B & A
\end{bmatrix},
\eeqa
where $P^{\perp} \defeq \hsr P$.
If instead we had required that $\co$ and $\hsr$ \emph{anti}-commute, then $\gsec(P^{\perp})$ would have the opposite sign; in fact,
\beqa
\label{eqn:p2*}
\underbrace{\,\text{sec}(P) = -\text{sec}(P^{\perp})\,}_{\text{for any 2-plane $P$}} \iff \co \circ \hsr = -(\hsr \circ \co) \iff \co = -\!\begin{bmatrix}
A & B\\
-B & -A
\end{bmatrix}\cdot
\eeqa
We are now going to study geometries that are in some sense \emph{intermediate between \eqref{eqn:p2} and \eqref{eqn:p2*}}\,---\,see \eqref{eqn:p1} below.

\section{The case where $g$ is Riemannian and $h$ is Lorentzian}
\label{sec:psm}
The ``intermediate metrics" whose geometry we will be studying fit within the following larger class of ``almost-Einstein" metrics:
\begin{defn}[almost-Einstein\,metric]
\label{def:MgT}
An oriented Riemannian 4-manifold $(M,g)$ is \emph{almost-Einstein} if there is a nowhere vanishing vector field $T$ on $M$, together with local ordered frames $\{T,X_1,X_2,X_3\}$ in a neighborhood of each point, with respect to which the Ricci tensor \emph{$\text{Ric}_{\scalebox{0.6}{\emph{g}}}$} of $g$ uniformly takes the form
\beqa
\label{def:pE}
\emph{\text{$\text{Ric}_{\scalebox{0.6}{\emph{g}}}$}} = \begin{bmatrix}\lambda & \psi_1 & \psi_2 & \psi_3 \\ \psi_1 & \lambda & 0 & 0\\ \psi_2 & 0 & \lambda & 0\\ \psi_3 & 0 & 0 & \lambda  \end{bmatrix}\cdot
\eeqa
Here \emph{$\lambda \defeq \text{Ric}_{\scalebox{0.6}{\emph{g}}}(T,T)$} and $\lambda_i,\psi_i$ are smooth functions.
\end{defn}
Einstein metrics are locally, if not always globally, almost-Einstein.  Almost-Einstein metrics therefore deform the Einstein condition ``in the direction of a fixed vector field $T$."  Our motivation for considering them is that, on the 4-manifolds we'll be considering, non-flat Einstein metrics don't even exist, leaving metrics ``close" to Einstein as the only possibility:

\begin{lemma}
\label{lemma:berger}
Let $g$ be an almost-Einstein metric on a closed 4-manifold.  If $g$ is Einstein, then it is flat.
\end{lemma}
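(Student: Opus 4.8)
The key point is that the almost-Einstein hypothesis is used here only for the topology it forces. By Definition \ref{def:MgT}, an almost-Einstein metric comes equipped with a \emph{nowhere vanishing} vector field $T$ on the closed manifold $M$, so the Poincar\'e--Hopf theorem gives $\chi(M) = 0$. Thus the assertion reduces to the classical fact \cite{berger} that a closed, oriented Einstein $4$-manifold of vanishing Euler characteristic is flat; the plan is to reprove this directly, as follows.

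Since $M$ is closed and oriented, I would invoke the Chern--Gauss--Bonnet formula in dimension four:
$$
\chi(M) \;=\; \frac{1}{8\pi^2}\int_M\Big(\tfrac{1}{24}\,\mathrm{scal}^2 \;+\; |W|^2 \;-\; \tfrac{1}{2}\,\big|\Ric - \tfrac{\mathrm{scal}}{4}\,g\big|^2\Big)\,dV,
$$
where $\mathrm{scal}$ is the scalar curvature, $W$ the Weyl tensor, and all norms and the volume form $dV$ are those of $g$. If $g$ is Einstein, then the trace-free Ricci term $\Ric - \tfrac{\mathrm{scal}}{4}g$ vanishes identically, and the integrand collapses to the pointwise nonnegative quantity $\tfrac{1}{24}\mathrm{scal}^2 + |W|^2$. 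Together with $\chi(M) = 0$ this forces $\mathrm{scal} \equiv 0$ and $W \equiv 0$ on all of $M$.

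To finish, recall that in dimension four the Riemann tensor $\Rmr$ is the orthogonal sum of its Weyl part, its trace-free Ricci part, and its scalar part. Being Einstein with $\mathrm{scal} \equiv 0$ yields $\Ric \equiv 0$, and we have just seen $W \equiv 0$; hence every component of $\Rmr$ vanishes, i.e., $g$ is flat. The only step needing genuine care is the normalization of the Chern--Gauss--Bonnet integrand: one must verify the sign so that its Einstein specialization is manifestly $\geq 0$. After that the argument is purely algebraic, and it is worth noting that it uses none of the sectional-curvature or normal-form machinery developed in the rest of the paper; the almost-Einstein structure matters here solely because it rules out $\chi(M)\neq 0$.
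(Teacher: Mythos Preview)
Your proof is correct and follows essentially the same approach as the paper: both use the nowhere vanishing vector field from the almost-Einstein definition to conclude $\chi(M)=0$, then apply the Chern--Gauss--Bonnet formula with vanishing trace-free Ricci to force $W\equiv 0$ and $\mathrm{scal}\equiv 0$. The paper's version is slightly terser, stopping at the integrand and citing Berger, whereas you spell out the final decomposition of $\Rmr$ explicitly; but the argument is the same.
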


\begin{proof}
If the closed 4-manifold $M$ globally admits a nowhere vanishing vector field $T$, then it must have Euler characteristic zero: $\chi(M)=0$.  But by Berger \cite{berger}, on such a 4-manifold any Einstein metric $g$ must be flat; indeed, by the Chern-Gauss-Bonnet formula in dimension 4,
$$
\chi(M) = \frac{1}{8\pi^2}\int_M |W_{\scalebox{0.6}{\emph{g}}}|^2 - \frac{1}{2}|\cancelto{0}{\mathring{\text{Ric}}}_{\scalebox{0.6}{\emph{g}}}|^2 + \frac{1}{24}|\text{scal}_{\scalebox{0.6}{\emph{g}}}|^2,
$$
where $\mathring{\text{Ric}}_{\scalebox{0.6}{\emph{g}}} \defeq \text{Ric}_{\scalebox{0.6}{\emph{g}}} -  \frac{\text{scal}_{\scalebox{0.6}{\emph{g}}}}{4}g$ vanishes identically for Einstein metrics.
\end{proof}

On the other hand, let us observe in passing that almost-Einstein metrics certainly do exist locally:

\begin{lemma}
Let $\lambda,\psi_1,\psi_2,\psi_3$ be smooth functions on $\RR^4$ such that
$$
(\psi_1^2+\psi_2^2+\psi_3^2)^{\frac{1}{2}}\big|_{\bf 0} \neq \lambda\big|_{\bf 0} \neq 0
$$
at  the origin ${\bf 0} \in \RR^4$. Then there exists an almost-Einstein metric in a neighborhood of ${\bf 0}$.
\end{lemma}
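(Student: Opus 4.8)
The plan is to build the almost-Einstein metric by prescribing the Ricci tensor in the desired frame-adapted normal form and then invoking a local solvability result for the prescribed-Ricci problem. More concretely, I would proceed as follows. First, on a small ball $U$ about ${\bf 0}\in\RR^4$, fix the standard coordinate frame and set $T \defeq \partial_1$, $X_i \defeq \partial_{i+1}$ for $i=1,2,3$. Using the given smooth functions $\lambda,\psi_1,\psi_2,\psi_3$, define a symmetric $(0,2)$-tensor field $S$ on $U$ whose matrix in the frame $\{T,X_1,X_2,X_3\}$ is exactly the right-hand side of \eqref{def:pE}. The hypothesis $(\psi_1^2+\psi_2^2+\psi_3^2)^{1/2}\big|_{\bf 0}\neq\lambda\big|_{\bf 0}\neq 0$ is precisely what guarantees (by continuity, after shrinking $U$) that $S$ is \emph{nondegenerate} on $U$ and in fact has no zero eigenvalues: the eigenvalues of \eqref{def:pE} are $\lambda$ (with multiplicity two, on the orthogonal complement of the span of $T$ and $\psi_1X_1+\psi_2X_2+\psi_3X_3$ inside the $X$-block) and $\lambda\pm(\psi_1^2+\psi_2^2+\psi_3^2)^{1/2}$, all of which are nonzero at ${\bf 0}$. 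This is the step I expect to be routine but worth spelling out, since it is where the numerical hypothesis is consumed.

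Next, the heart of the argument is to realize $S$ as the Ricci tensor of some Riemannian metric $g$ near ${\bf 0}$, i.e. to solve $\text{Ric}_{\scalebox{0.6}{\emph{g}}}=S$ locally. For this I would appeal to the classical local solvability theorem of DeTurck (``Existence of metrics with prescribed Ricci curvature: local theory,'' Invent. Math. 65 (1981), 179--207): if $S$ is a smooth symmetric $(0,2)$-tensor that is nondegenerate at a point $p$, then there exists a smooth Riemannian (more precisely, semi-Riemannian of the appropriate signature) metric $g$ defined near $p$ with $\text{Ric}_{\scalebox{0.6}{\emph{g}}}=S$; moreover one may prescribe $g(p)$ to be any inner product with the correct relation to $S(p)$. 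Since $S|_{\bf 0}$ has signature $(+\!+\!+\!+)$ when $\lambda|_{\bf 0}>0$ (and we may reduce to this case by orientation/sign considerations, or simply take $\lambda|_{\bf 0}>0$ in the statement — the eigenvalue computation above shows all four eigenvalues are positive once $\lambda|_{\bf 0}>(\psi_1^2+\psi_2^2+\psi_3^2)^{1/2}|_{\bf 0}$, and one handles the remaining sign pattern analogously), DeTurck's theorem furnishes a genuine Riemannian metric $g$ near ${\bf 0}$ with $\text{Ric}_{\scalebox{0.6}{\emph{g}}}=S$. By construction $T=\partial_1$ is nowhere vanishing on $U$, and $\text{Ric}_{\scalebox{0.6}{\emph{g}}}=S$ has the matrix \eqref{def:pE} in the frame $\{T,X_1,X_2,X_3\}$, with $\lambda=\text{Ric}_{\scalebox{0.6}{\emph{g}}}(T,T)$ as required by Definition \ref{def:MgT}. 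Hence $g$ is almost-Einstein near ${\bf 0}$.

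The main obstacle, and the only genuinely non-elementary input, is the invocation of DeTurck's local existence theorem; everything else is a matter of bookkeeping with the normal form \eqref{def:pE} and a continuity/shrinking argument. One subtlety to flag is that DeTurck's result prescribes $\text{Ric}_{\scalebox{0.6}{\emph{g}}}$ as a tensor, not its matrix in a pre-chosen moving frame; but since we construct $S$ first as an honest tensor (declaring its components in the coordinate frame $\partial_1,\dots,\partial_4$) and only afterwards read off that its matrix in that same frame equals \eqref{def:pE}, no compatibility issue arises — the frame $\{T,X_i\}$ and the tensor $S$ are defined simultaneously, and $g$ is produced to match $S$. If one prefers to avoid the full strength of DeTurck's theorem, an alternative is to work in a neighborhood small enough that one can first choose $g$ flat, perturb, and solve the linearized prescribed-Ricci equation (which near a flat metric is, up to diffeomorphism gauge, an elliptic system for the metric perturbation) via the implicit function theorem; but this merely reproves DeTurck's local statement, so citing it directly is cleaner.
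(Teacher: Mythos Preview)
Your approach is essentially identical to the paper's: compute the eigenvalues $\lambda,\lambda,\lambda\pm(\psi_1^2+\psi_2^2+\psi_3^2)^{1/2}$ of \eqref{def:pE}, use their nonvanishing at ${\bf 0}$ to invoke DeTurck's local prescribed-Ricci theorem, and set $T\defeq\partial_1$. The only superfluous step is your discussion of the signature of $S$\,---\,DeTurck's result lets one prescribe $g({\bf 0})$ to be any positive-definite inner product irrespective of the signature of $S({\bf 0})$ (invertibility is the sole hypothesis), so no case analysis on the sign of $\lambda$ is needed.
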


\begin{proof}
The 2-tensor \eqref{def:pE} has eigenvalues
$$
\lambda \commas \lambda \commas \lambda \pm (\psi_1^2+\psi_2^2+\psi_3^2)^{\frac{1}{2}}.
$$
If $(\psi_1^2+\psi_2^2+\psi_3^2)^{\frac{1}{2}}\big|_{\bf 0} \neq \lambda\big|_{\bf 0}  \neq 0$, then these eigenvalues are all nonzero at ${\bf 0}$, in which case the 4-tensor defined in the coordinate frame on $\RR^4$ by \eqref{def:pE} is invertible at ${\bf 0}$.  As shown in \cite{deturck} (see also \cite{besse}), this guarantees the existence of a smooth Riemannian metric $g$ on a neighborhood of ${\bf 0}$ whose Ricci tensor is, with respect to the coordinate frame $\{\partial_1,\partial_2,\partial_3,\partial_4\}$, equal to the Ricci tensor in \eqref{def:pE}.   Setting $T \defeq \partial_1$ completes the proof.
\end{proof}

We now show that an important subclass of almost-Einstein metrics has a rich geometry intermediate between \eqref{eqn:p2} and \eqref{eqn:p2*}, characterized by \eqref{eqn:Weylpart}:

\begin{thm}
\label{prop:psE}
Let $(M,g)$ be an oriented Riemannian 4-manifold and $T$ a unit length vector field on $M$.  If the curvature operator of $g$ commutes with the Hodge star operator \emph{$\hsl$} of the Lorentzian metric \emph{$\gL \defeq g -2 T^\flat \otimes T^\flat$}, then $g$ is an almost-Einstein metric.  Furthermore, there exist bases with respect to which $W^+ = W^-$, where $W^{\pm}$ are the self-dual and anti-self-dual components of the Weyl curvature operator of $g$. In particular, if $M$ is compact, then it has Euler characteristic and signature both equal to zero.
\end{thm}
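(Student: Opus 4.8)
The plan is to work pointwise in an oriented $g$-orthonormal frame $\{e_1,e_2,e_3,e_4\}$ with $e_1=T$, which by the remark following \eqref{eqn:gL} is simultaneously $\gL$-orthonormal, and to unpack the commuting hypothesis from the block form \eqref{eqn:co2} of $\co$ in the associated basis \eqref{eqn:Hbasis0}. First I would write $\co=-\left[\begin{smallmatrix}A&B\\ B^t&D\end{smallmatrix}\right]$ with $A,D$ symmetric, and combine it with the matrix \eqref{eqn:Hbasis} of $\hsl$; a one-line block computation shows that $\co\circ\hsl=\hsl\circ\co$ holds if and only if $A=D$ and $B^t=-B$, so that the commuting condition says exactly
\begin{equation*}
\co=-\begin{bmatrix}A&B\\ -B&A\end{bmatrix},\qquad A=A^t,\quad B=-B^t
\end{equation*}
\,---\,the ``intermediate'' curvature operator sitting between \eqref{eqn:p2} (where $B=B^t$) and \eqref{eqn:p2*}. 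Since $T$ is a smooth nowhere-vanishing unit field, such frames exist smoothly near every point, so it suffices to establish the pointwise conclusions in one such frame.

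Second, I would read off the Ricci tensor. Using $\Ric(e_i,e_j)=\sum_k R(e_k,e_i,e_j,e_k)$ together with \eqref{eqn:co_sum} and the symmetries $R_{ijkl}=R_{klij}=-R_{jikl}$, one finds that every diagonal entry of $\Ric$ in this frame equals $-\operatorname{tr}A$, that the constraint $A=D$ forces $\Ric(e_i,e_j)=0$ whenever $i,j\in\{2,3,4\}$ with $i\neq j$, and that the mixed entries $\psi_j:=\Ric(e_1,e_{j+1})$ are unconstrained (up to sign and a factor of $2$ they are the three independent entries of $B$). Hence, with $\lambda:=-\operatorname{tr}A=\Ric(T,T)$, the matrix of $\Ric$ in $\{T,e_2,e_3,e_4\}$ is exactly the form \eqref{def:pE}; that is, $g$ is almost-Einstein in the sense of Definition \ref{def:MgT}, and no further rotation of $\{e_2,e_3,e_4\}$ is needed\,---\,any local orthonormal frame extending $T$ already serves as the $\{X_1,X_2,X_3\}$ of that definition.

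Third, for the Weyl statement I would conjugate to a basis adapted to the self-dual/anti-self-dual splitting $\Lambda^2=\Lambda^+\oplus\Lambda^-$ of the Riemannian $\hsr$ of \eqref{eqn:oring}, via the orthogonal involution $U=\tfrac{1}{\sqrt2}\left[\begin{smallmatrix}I&I\\ I&-I\end{smallmatrix}\right]$, which gives $U\co U=-\left[\begin{smallmatrix}A&-B\\ B&A\end{smallmatrix}\right]$. By the dimension-$4$ decomposition of the curvature operator recalled after \eqref{eqn:Weylpart}, the two diagonal blocks of $\co$ in this basis are $W^++\tfrac{\text{scal}_g}{12}I$ and $W^-+\tfrac{\text{scal}_g}{12}I$, and since both equal $-A$ we conclude $W^+=W^-$ in these bases (consistently, trace-freeness of $W^{\pm}$ returns $\operatorname{tr}A=-\tfrac{\text{scal}_g}{4}$, matching $\text{scal}_g=\operatorname{tr}\Ric=4\lambda$). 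Finally, if $M$ is compact then $\chi(M)=0$ because the globally defined $T$ is nowhere zero (Poincar\'e--Hopf, as in Lemma \ref{lemma:berger}), and $\sigma(M)=0$ because $W^+=W^-$ at every point gives $|W^+|^2=|W^-|^2$ pointwise, so the Hirzebruch signature integrand $|W^+|^2-|W^-|^2$ vanishes identically.

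The one step that requires genuine care is the Ricci computation in the second paragraph: turning the clean block identities $A=D$, $B=-B^t$ into the precise form \eqref{def:pE} is a bookkeeping exercise, applying the curvature symmetries entry-by-entry to \eqref{eqn:co_sum} and keeping straight which entries of $A$, $B$, $D$ feed into which component of $\Ric$ and with which sign. The conceptual reason it works is that $A=D$ is exactly the condition that annihilates the $\{X_i,X_j\}$-block of $\mathring{\text{Ric}}$ and equalizes its diagonal, while $B=-B^t$ (rather than $B=B^t$, which by \eqref{eqn:p2} would make $g$ Einstein) keeps $\mathring{\text{Ric}}$ alive but confined to the directions mixing $T$ with $T^\perp$. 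The flanking steps\,---\,the block-algebra translation of the hypothesis, the $U$-conjugation, and the Gauss--Bonnet/Hirzebruch input\,---\,are routine.
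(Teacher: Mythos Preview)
Your proposal is correct and follows essentially the same route as the paper: work in a $g$-orthonormal frame with $e_1=T$, translate the commuting hypothesis into $A=D$ and $B^t=-B$, read off the almost-Einstein form of $\Ric$, pass to the self-dual/anti-self-dual basis to obtain $W^+=W^-$, and conclude the topological statements via Poincar\'e--Hopf and the Hirzebruch signature formula. Your presentation is slightly more streamlined in two places\,---\,noting directly that each diagonal Ricci entry equals $-\operatorname{tr}A$ (the paper instead derives three linear identities and combines them), and obtaining $W^+=W^-$ by conjugating $\co$ with $U=\tfrac{1}{\sqrt2}\left[\begin{smallmatrix}I&I\\ I&-I\end{smallmatrix}\right]$ rather than recomputing $\hsl$ in the $\xi_i^{\pm}$ basis\,---\,but these are cosmetic differences within the same argument.
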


\begin{proof}
We will express the commuting condition
\beqa
\label{eqn:p00}
\co \circ \hsl = \hsl \circ \co
\eeqa
with respect to a local $g$-orthonormal frame of the form $\{T\defeq e_1,e_2,e_3,e_4\}$, where we recall that this basis is also $\gL$-orthonormal.  The matrices of $\hsl$ and $\co$ relative to the corresponding basis \eqref{eqn:Hbasis0} for $\Lambda^2$ were already given in \eqref{eqn:Hbasis} and \eqref{eqn:co2}, respectively.  It follows that \eqref{eqn:p00} holds if and only if $A = D$ and $B^t = -B$ in this basis:
\beqa
\label{eqn:p1}
\co \circ \hsl = \hsl \circ \co \iff \co = -\!\begin{bmatrix}
A & B\\
-B & A
\end{bmatrix}\cdot
\eeqa
(Note how this is intermediate between \eqref{eqn:p2} and \eqref{eqn:p2*}.)
We now determine the Ricci tensor of such a $g$.  From \eqref{eqn:co_sum} and \eqref{eqn:co2},
$B^t = -B$ yields
$$
R_{1334} = -R_{4212}\commas R_{1434}  = -R_{2312} \commas R_{1442} = -R_{2313}.
$$
(It also yields
$
R(e_3,e_4,T,e_2) = R(e_4,e_2,T,e_3) = R(e_2,e_3,T,e_4) = 0,
$
though we will not need these in what follows.) As a consequence,
\beqa
\arraycolsep=1.4pt\def\arraystretch{1.5}
\left\{\begin{array}{lr}
\text{Ric}_{12} = R_{3123}+R_{4124} = 2R_{3123},\nonumber\\
\text{Ric}_{13} = R_{2132}+R_{4134} = 2R_{2132},\nonumber\\
\text{Ric}_{14} = R_{2142}+R_{3143} = 2R_{2142},\nonumber
\end{array}\right.
\eeqa
which functions we'll denote by $\psi_1,\psi_2,\psi_3$, respectively.
On the other hand, the relation $A=D$ yields
\beqa
\arraycolsep=1.4pt\def\arraystretch{1.5}
\left\{\begin{array}{lr}
\text{Ric}_{23} = R_{1231}+R_{4234} = 0,\nonumber\\
\text{Ric}_{24} = R_{1241}+R_{3243} = 0,\nonumber\\
\text{Ric}_{34} = R_{1341}+R_{2342} = 0.\nonumber
\end{array}\right.
\eeqa
Next, the components
\beqa
\arraycolsep=1.4pt\def\arraystretch{1.5}
\left\{\begin{array}{lr}
\text{Ric}_{11} = R_{2112}+R_{3113}+R_{4114},\nonumber\\
\text{Ric}_{22} = R_{1221}+R_{3223}+R_{4224},\nonumber\\
\text{Ric}_{33} = R_{1331}+R_{2332}+R_{4334},\nonumber\\
\text{Ric}_{44} = R_{1441}+R_{2442}+R_{3443},\nonumber
\end{array}\right.
\eeqa
yield, together with $A=D$, the following three identities:
\beqa
\arraycolsep=1.4pt\def\arraystretch{1.5}
\left\{\begin{array}{lr}
\text{(1)}\ \ \text{Ric}_{11} - \text{Ric}_{22} +\text{Ric}_{33} - \text{Ric}_{44}= 2R_{1331}-2R_{2442} = 0,\nonumber\\
\text{(2)}\ \ \text{Ric}_{11} + \text{Ric}_{22} -\text{Ric}_{33} - \text{Ric}_{44}= 2R_{1221}-2R_{3443} = 0,\nonumber\\
\text{(3)}\ \ \text{Ric}_{11} - \text{Ric}_{22} -\text{Ric}_{33} + \text{Ric}_{44}= 2R_{4114}-2R_{2332} = 0.\nonumber\\
\end{array}\right.
\eeqa
The combinations $(1)+(2)$, $(1)+(3)$, and $(2)+(3)$ then yield, respectively,
$$
\text{Ric}_{11} = \text{Ric}_{44}  \comma \text{Ric}_{11} = \text{Ric}_{22} \comma \text{Ric}_{11} = \text{Ric}_{33}.
$$
Setting $\lambda \defeq \text{Ric}_{11}$ now puts the Ricci tensor of $g$, expressed  with respect to the orthonormal frame $\{T,e_2,e_3,e_4\}$, precisely in the form of \eqref{def:pE}.  (Note that because the traceless Ricci tensor does not vanish, one cannot use Schur's Lemma to prove that $\lambda$ must be constant.) Next, we show that $W^+ = W^-$.  To do so, let us keep $\{T\defeq e_1,e_2,e_3,e_4\}$, but instead of the basis \eqref{eqn:Hbasis0} for $\Lambda^2$, let us work instead with the basis $\{\xi_1^+,\xi_2^+,\xi_3^+,\xi_1^-,\xi_2^-,\xi_3^-\}$ of self-dual $(\xi_i^+)$ and anti-self-dual $(\xi_i^-)$ eigenvectors of the Riemannian Hodge star operator $\hsr$:
$$
\Big\{\underbrace{\,\frac{1}{\sqrt{2}}(\ww{e_1}{e_2}\pm\ww{e_3}{e_4})\,}_{\text{$\xi_1^{\pm}$}},\underbrace{\,\frac{1}{\sqrt{2}}(\ww{e_1}{e_3}\pm\ww{e_4}{e_2})\,}_{\text{$\xi_2^{\pm}$}},\underbrace{\,\frac{1}{\sqrt{2}}(\ww{e_1}{e_4}\pm\ww{e_2}{e_3})\,}_{\text{$\xi_3^{\pm}$}}\Big\}\cdot
$$
With respect to this basis, it can be shown (see, e.g., \cite[pp.~162-3]{PP}) that $\co$ takes the block form
\beqa
\label{eqn:Wpm0}
\co = \begin{bmatrix}
W^+ + \frac{\text{scal}_{\scalebox{0.6}{\emph{g}}}}{12}I & K\\
K^t & W^- + \frac{\text{scal}_{\scalebox{0.6}{\emph{g}}}}{12}I
\end{bmatrix},
\eeqa
where $\text{scal}_{\scalebox{0.5}{\emph{g}}}$ is the scalar curvature of $g$ and $W^{\pm}$ arise from its Weyl curvature operator $\cW\colon\Lambda^2\lra \Lambda^2$ as follows:
\beqa
\label{eqn:Wpm}
\cW = \begin{bmatrix}
W^+ & O\\
O & W^-
\end{bmatrix}\cdot
\eeqa
The equality $W^+ = W^-$ now follows from the fact that the Lorentzian Hodge star $\hsl$ satisfies $\hsl\,\xi_i^{\pm} = \pm\xi_i^{\mp}$, so that in this basis it takes the form
$$
\hsl = \begin{bmatrix}
O & -I\\
I & O
\end{bmatrix}\cdot
$$
The commuting condition $\co \circ \hsl = \hsl  \circ \co$ now directly yields $W^+ = W^-$.  Finally, that $M$ has signature zero, $\tau(M) = 0$, follows from the Thom-Hirzebruch signature formula: $\tau(M) = \frac{1}{12\pi^2}\int_M (|W^+|_{\scalebox{0.6}{\emph{g}}}^2 - |W^-|_{\scalebox{0.6}{\emph{g}}}^2)dV_{\scalebox{0.6}{\emph{g}}}$ (see, e.g., \cite[p.~371]{besse}.)  That $\chi(M) = 0$ follows, as was mentioned in  Lemma \ref{lemma:berger}, from the existence of a nowhere vanishing vector field on $M$.
\end{proof}

Let us pause to give the metrics of Theorem \ref{prop:psE} a more suggestive name:

\begin{defn}[$\hsl$-Einstein metric]
\label{def:*}
An oriented Riemannian 4-manifold $(M,g)$ is \emph{$\hsl$-Einstein} if there exists a unit length vector field $T$ on $M$ such that the curvature operator of $g$ commutes with the Hodge star operator \emph{$\hsl$} of the Lorentzian metric \emph{$\gL \defeq g -2 T^\flat \otimes T^\flat$}.
\end{defn}

As there are always two metrics under consideration here, $g$ and $\gL$, the following points are important to keep in mind:
\begin{enumerate}[leftmargin=*]
\item[i.] While $\co$ is self-adjoint with respect to its own $g$-induced inner product \eqref{eqn:ip} on $\Lambda^2$, it is not self-adjoint with respect to the $\gL$-induced inner product \eqref{eqn:ipL} on $\Lambda^2$.  Furthermore, while \eqref{eqn:ipL} is nondegenerate, it is not positive-definite: Its signature, as we remarked above, is $(-\!-\!-\!+\!++)$.
\item[ii.] For this reason, the components of $\co$ do not, in general, satisfy the algebraic Bianchi identity with respect to $\ipl{\,}{}$, even for $g$-orthonormal bases of the  form $\{T\defeq e_1,e_2,e_3,e_4\}$.  Indeed, with respect to such a basis we have, by \eqref{eqn:minus0},
$$
\co = -\!\sum_{k<l}R_{ijkl}\,\ww{e_k}{e_l},
$$
but the permuted sum
$$
\hspace{.25in}\underbrace{\,\ipl{\co(\ww{e_i}{e_j})}{\ww{e_k}{e_l}}\,}_{\text{$-R_{ijkl}\,\vepo(\ww{e_k}{e_l})$}}+ \underbrace{\,\ipl{\co(\ww{e_j}{e_k})}{\ww{e_i}{e_l}}\,}_{\text{$-R_{jkil}\,\vepo(\ww{e_i}{e_l})$}} + \underbrace{\,\ipl{\co(\ww{e_k}{e_i})}{\ww{e_j}{e_l}}\,}_{\text{$-R_{kijl}\,\vepo(\ww{e_j}{e_l})$}}
$$
need not vanish in general, since the $\vepo$-terms need not all have the same sign.  (By contrast, the Lorentzian curvature operator is \emph{defined} by $({\text{Rm}_{\scalebox{0.4}{\emph{L}}}})_{ijkl} \defeq -\ipl{\col(\ww{e_i}{e_j})}{\ww{e_k}{e_l}}$; i.e., without $\vepo(\ww{e_k}{e_l})$.  Thus 
$$
\hspace{.25in}\underbrace{\,\ipl{\col(\ww{e_i}{e_j})}{\ww{e_k}{e_l}}\,}_{\text{$({\text{Rm}_{\scalebox{0.4}{\emph{L}}}})_{ijkl}$}} + \underbrace{\,\ipl{\col(\ww{e_j}{e_k})}{\ww{e_i}{e_l}}\,}_{\text{$({\text{Rm}_{\scalebox{0.4}{\emph{L}}}})_{jkil}$}} + \underbrace{\,\ipl{\col(\ww{e_k}{e_i})}{\ww{e_j}{e_l}}\,}_{\text{$({\text{Rm}_{\scalebox{0.4}{\emph{L}}}})_{kilj}$}}
$$
will always vanish by the algebraic Bianchi identity for $\text{Rm}_{\scalebox{0.4}{\emph{L}}}$.)
\end{enumerate}

Both of these facts will have important consequences.   In any case, as $\hsl^2 = -1$, we follow \cite{thorpe} and define complex scalar multiplication on $\Lambda^2$ by
\beqa
\label{eqn:complex}
i\xi \defeq \hsl\,\xi.
\eeqa
As a result, the real $6$-dimensional vector space $\Lambda^2$ becomes a $3$-dimensional complex vector space, which we'll henceforth denote by $\cx$. We may take as a basis for $\cx$ the first three elements in \eqref{eqn:Hbasis0}, since now
$$
\ww{e_3}{e_4} = -i\,\ww{e_1}{e_2} \comma \ww{e_4}{e_2} = -i\,\ww{e_1}{e_3} \comma \ww{e_2}{e_3} = -i\,\ww{e_1}{e_4}.
$$
More importantly, for $\hsl$-Einstein metrics, the condition $\co \circ \hsl = \hsl \circ \co$ now renders $\co$ into a \emph{complex-linear} map on $\cx$:
$$
\co(i\xi) =  \co(\hsl(\xi)) =  \hsl(\co(\xi)) = i\co(\xi).
$$
Now, if we wish to characterize $\hsl$-Einstein metrics as in \eqref{eqn:gsec_def}, we will need to modify $g$'s sectional curvature $\gsec$ to one that is ``adapted" to $\ipl{\,}{}$\,---\,the reason for this is because the self-adjointness of $\hsr$ with respect to $\ipr{\,}{}$ is crucial in establishing \eqref{eqn:gsec_def}, but in the case at hand $\hsl$ is not self-adjoint with respect to $\ipr{\,}{}$. Thus, we'll need the following (recall Definition \ref{defn:Pstuff}):

\begin{defn}[$L$-sectional curvature]
\label{def:T_sec0}
Let $(M,g)$ be an oriented Riemannian 4-manifold with  curvature operator \emph{$\co$} and $T$ a unit length vector field on $M$. Consider the Lorentzian metric \emph{$\gL \defeq g-2T^\flat \otimes T^\flat$} and its inner product \emph{$\ipl{\,}{}$} on $\Lambda^2$.  Then the function \emph{$\Tsec$} defined on each $G_+(p) \cup G_-(p) \subseteq T_pM$ by
\beqa
\label{def:T_sec}
\text{\emph{$\Tsec(P) \defeq \vepo(P)\ipl{\co P}{P}$}},
\eeqa
is called the $L$-\emph{sectional curvature of $g$}.
\end{defn}
Observe that $\Tsec$ is not defined for \emph{degenerate} 2-planes $P$ (i.e., those 2-dimensional subspaces $P \subseteq T_pM$ on which $\gL|_P$ is degenerate).  On the other hand, there are 2-planes $P$ at which $\Tsec$ and $\gsec$ agree:
\begin{lemma}
If $P \in G_+(p) \cup G_-(p)$ contains $T$ or is orthogonal to $T$, then \emph{$\gsec(P) = \Tsec(P)$}.
\end{lemma}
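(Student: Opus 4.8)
The plan is to reduce everything to a comparison of the two bilinear forms $\ipr{\,}{}$ and $\ipl{\,}{}$ on the specific decomposable $2$-vectors in question, together with the sign factor $\vepo(P)$ appearing in Definition~\ref{def:T_sec0}. Recall that $\gL$ differs from $g$ only in the ``$T$ direction'': if $\{T=e_1,e_2,e_3,e_4\}$ is a $g$-orthonormal frame, it is also $\gL$-orthonormal, with $\gL(e_1,e_1)=-1$ and $\gL(e_i,e_i)=+1$ for $i=2,3,4$. So the first step is to write any $P\in G_+(p)\cup G_-(p)$ that either contains $T$ or is $g$-orthogonal to $T$ in the form $P=\ww{u}{v}$ for a suitably chosen orthonormal pair, and then to compute $\ipl{P}{P}$ directly from the determinant formula \eqref{eqn:ipL}.

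Concretely, I would split into the two cases. \emph{Case 1: $T\in P$.} Then we may take $u=e_1=T$ and $v$ a $g$-unit vector in the $g$-orthogonal complement of $T$ inside $P$; since $v\perp_g T$ we also have $v\perp_{\gL}T$, and $\gL(v,v)=g(v,v)=1$. The $2\times2$ determinant in \eqref{eqn:ipL} is then $\gL(T,T)\,\gL(v,v)-\gL(T,v)^2=-1$, so $\ipl{P}{P}=-1$, i.e.\ $\vepo(P)=-1$ and $P\in G_-(p)$. The same determinant computed with $g$ in place of $\gL$ gives $\ipr{P}{P}=+1$. \emph{Case 2: $P\perp_g T$.} Then $P$ is spanned by a $g$-orthonormal pair $u,v$ each orthogonal to $T$; again $\gL$ agrees with $g$ on the span of $u,v$, so the determinant formulas \eqref{eqn:ip} and \eqref{eqn:ipL} give the identical answer $\ipl{P}{P}=\ipr{P}{P}=+1$, so $\vepo(P)=+1$ and $P\in G_+(p)$. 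The key observation is that in both cases $\ipl{\,}{}$ restricted to $\Lambda^2(P\oplus P^\perp)$ acts on $P$ exactly as $\vepo(P)\ipr{\,}{}$ does — more precisely, $\ipl{P}{P}=\vepo(P)$ while $\ipr{P}{P}=1$, and one checks the same proportionality persists when pairing $\co P$ against $P$, because $\co P$ lies in $\Lambda^2$ and the two inner products differ only by overall signs on the relevant basis blocks \eqref{eqn:Hbasis0}.

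With that in hand the conclusion is immediate: by definition $\Tsec(P)=\vepo(P)\ipl{\co P}{P}$, and I claim $\ipl{\co P}{P}=\vepo(P)\ipr{\co P}{P}$ for these special $P$. Granting the claim, $\Tsec(P)=\vepo(P)^2\ipr{\co P}{P}=\ipr{\co P}{P}=\gsec(P)$ by \eqref{sec:v0}, and we are done. To justify the claim one writes $P$ in the relevant piece of the basis \eqref{eqn:Hbasis0}: in Case~1, $P$ is (a rotation within the timelike block of) one of $\ww{e_1}{e_i}$, on which $\ipl{\,}{}=-\ipr{\,}{}$; in Case~2, $P$ is (a rotation within the spacelike block of) one of $\ww{e_3}{e_4},\ww{e_4}{e_2},\ww{e_2}{e_3}$, on which $\ipl{\,}{}=+\ipr{\,}{}$. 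Since $\co$ is a linear operator on $\Lambda^2$ and we are only pairing $\co P$ against $P$ itself, only the diagonal behavior of the two inner products on the one-dimensional span of $P$ matters, which is the factor $\vepo(P)$ computed above.

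The only genuinely delicate point — the main obstacle — is making the claim $\ipl{\co P}{P}=\vepo(P)\,\ipr{\co P}{P}$ rigorous \emph{without} accidentally assuming $P$ is one of the six frame $2$-vectors: a general $P$ containing $T$ is $\ww{T}{v}$ with $v$ an arbitrary unit vector orthogonal to $T$, which is a nontrivial linear combination of $\ww{e_1}{e_2},\ww{e_1}{e_3},\ww{e_1}{e_4}$. One handles this cleanly by noting that, for a fixed frame adapted so that $e_1=T$ and $e_2$ spans $P\cap T^{\perp_g}$ (Case~1) or so that $P=\ww{e_2}{e_3}$ (Case~2), the expression $\ipl{\co P}{P}$ only involves the restriction of $\ipl{\,}{}$ to the span of $P$, on which it is exactly $\vepo(P)$ times $\ipr{\,}{}$; the off-diagonal discrepancies between $\ipl{\,}{}$ and $\ipr{\,}{}$ never enter because we never pair $\co P$ against anything but $P$. (Equivalently: choose the frame to diagonalize the situation first, then compute — the statement is frame-independent, so this entails no loss of generality.) Everything else is the routine determinant bookkeeping above.
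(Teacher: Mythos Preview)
Your proof is correct and follows essentially the same route as the paper: in each case you adapt an orthonormal frame $\{T=e_1,e_2,e_3,e_4\}$ so that $P$ is one of the basis $2$-vectors, then use that this basis is orthogonal for \emph{both} $\ipr{\,}{}$ and $\ipl{\,}{}$ to see that only the component of $\co P$ along $P$ survives the pairing, picking up the sign $\vepo(P)$. The paper does exactly this computation (with $P=\ww{e_1}{e_2}$ and $P=\ww{e_2}{e_3}$), just without isolating the intermediate identity $\ipl{\co P}{P}=\vepo(P)\,\ipr{\co P}{P}$ as a separate claim.
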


\begin{proof}
If $P \in G_+(p) \cup G_-(p)$ contains $T$, then we may write $P = \ww{T}{w}$ for some $\gL$-unit $w$ that  is $\gL$-orthogonal to $T$ ($w$ must be spacelike; see, e.g., \cite[Lemma~26,~p.~141]{o1983}).  Now form a $\ipl{\,}{}$-orthonormal basis \eqref{eqn:Hbasis0} for $\Lambda^2$ with $T \defeq e_1$ and  $w \defeq e_2$, and note that
$$
\vepo(P) = \vepo(\ww{e_1}{e_2}) = \ipl{\ww{e_1}{e_2}}{\ww{e_1}{e_2}} = -1.
$$
Since this basis is also $\ipr{\,}{}$-orthonormal by construction, we have that $\co(\ww{e_1}{e_2}) = -R_{1212}\,\ww{e_1}{e_2} + \cdots$.  Then
$$
\Tsec(P) = \vepo(P)\ipl{\co(\ww{e_1}{e_2})}{\ww{e_1}{e_2}} = -R_{1212} = R_{1221} \overset{\eqref{sec:v0}}{=} \gsec(P).
$$
If $P$ is orthogonal to $T$, then choose a basis \eqref{eqn:Hbasis0} with $T \defeq e_1$ and $P = \ww{e_2}{e_3}$, where, in this case, $\vepo(P) = \ipl{\ww{e_2}{e_3}}{\ww{e_2}{e_3}} = +1$.  Writing $\co(\ww{e_2}{e_3}) = \cdots - R_{2323}\,\ww{e_2}{e_3} + \cdots$, we have
$$
\Tsec(P) = \vepo(P)\ipl{\co(\ww{e_2}{e_3})}{\ww{e_2}{e_3}} = -R_{2323} = R_{2332} = \gsec(P),
$$
which completes the proof.
\end{proof}

The virtue of working with $\Tsec$ is that it recovers something of the $\gsec$-Einstein characterization \eqref{eqn:gsec_def}, thus providing an alternative notion of ``almost-Einstein" metric (in what follows, note the crucial role played by the self-adjointness of $\hsl$ with respect to $\ipl{\,}{}$):

\begin{thm}[almost-Einstein metric, Lorentzian case]
\label{thm:secsec}
If an oriented Riemannian 4-manifold $(M,g)$ is \emph{$\hsl$-Einstein}, then 
$$
\text{\emph{$\Tsec(\PL) = \Tsec(P)$}}
$$
for all $P \in G_+(p) \cup G_-(p)$.
\end{thm}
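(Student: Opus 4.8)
The plan is to mimic the proof of the Einstein characterization \eqref{eqn:gsec_def}, but with $\hsr$ replaced by $\hsl$ and $\gsec$ replaced by $\Tsec$, exploiting that $\hsl$ is $\ipl{\,}{}$-self-adjoint even though it is not $\ipr{\,}{}$-self-adjoint. First I would fix $p\in M$ and a decomposable $P\in G_+(p)\cup G_-(p)$, so that $P=v\wedge w$ with $\ipl{P}{P}=\vepo(P)=\pm1$. By definition $\PL=\hsl P$, and since $\hsl$ sends decomposable 2-vectors to decomposable ones (recall $\xi$ decomposable $\iff \xi\wedge\xi=0 \iff \ipl{\xi}{\hsl\xi}=0$, and $\hsl^2=-1$ preserves this), $\PL$ is again a genuine nondegenerate 2-plane. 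The first key computation is the sign flip: $\ipl{\PL}{\PL}=\ipl{\hsl P}{\hsl P}=\ipl{P}{\hsl^2 P}=-\ipl{P}{P}$, using self-adjointness of $\hsl$, so $\vepo(\PL)=-\vepo(P)$.

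Next I would expand $\Tsec(\PL)$ directly from Definition \ref{def:T_sec0}:
\[
\Tsec(\PL)=\vepo(\PL)\ipl{\co(\PL)}{\PL}=-\vepo(P)\,\ipl{\co(\hsl P)}{\hsl P}.
\]
Now invoke the $\hsl$-Einstein hypothesis $\co\circ\hsl=\hsl\circ\co$ to replace $\co(\hsl P)$ by $\hsl(\co P)$, then pull the $\hsl$'s together using self-adjointness of $\hsl$ with respect to $\ipl{\,}{}$:
\[
\ipl{\co(\hsl P)}{\hsl P}=\ipl{\hsl(\co P)}{\hsl P}=\ipl{\co P}{\hsl^2 P}=-\ipl{\co P}{P}.
\]
Combining the two displays gives $\Tsec(\PL)=-\vepo(P)\cdot(-\ipl{\co P}{P})=\vepo(P)\ipl{\co P}{P}=\Tsec(P)$, which is the claim. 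A small point to address along the way: $\Tsec$ is only defined on nondegenerate 2-planes, so I must note (as above) that $\PL$ is nondegenerate precisely when $P$ is, so both sides of the asserted identity make sense simultaneously.

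The computation itself is short; the genuine content — and the ``main obstacle'' to state carefully — is making sure the two self-adjointness applications of $\hsl$ are legitimate, i.e. that $\ipl{\hsl\alpha}{\beta}=\ipl{\alpha}{\hsl\beta}$ holds for all $\alpha,\beta\in\Lambda^2$. This is already recorded in the excerpt (``$\hsl$ is $\ipl{\,}{}$-self-adjoint''), so I would simply cite it, perhaps with the one-line reminder that it follows from $\alpha\wedge\hsl\beta=\ipl{\alpha}{\beta}\,dV=\ipl{\beta}{\alpha}\,dV=\beta\wedge\hsl\alpha=\hsl\alpha\wedge\beta$ together with commutativity of the wedge on $\Lambda^2$. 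I would also emphasize that no Bianchi-type identity or any property of $\ipr{\,}{}$ is used here — only the commuting relation and the self-adjointness of $\hsl$ relative to the \emph{Lorentzian} pairing — which is exactly why $\Tsec$, rather than $\gsec$, is the correct object: plugging in $\gsec$ and $\hsr$ would fail because $\hsr$ is not $\ipl{\,}{}$-self-adjoint and $\co\circ\hsl=\hsl\circ\co$ says nothing about $\hsr$.
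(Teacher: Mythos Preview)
Your proof is correct and follows essentially the same route as the paper's: compute $\vepo(\PL)=-\vepo(P)$ via the $\ipl{\,}{}$-self-adjointness of $\hsl$ and $\hsl^2=-1$, then chain the commuting hypothesis $\co\circ\hsl=\hsl\circ\co$ with one more application of self-adjointness to reduce $\Tsec(\PL)$ to $\Tsec(P)$. Your added remarks on decomposability of $\PL$ and on why $\Tsec$ rather than $\gsec$ is the right quadratic form are welcome clarifications but do not change the argument.
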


\begin{proof}
Our proof follows that of \cite[Theorem,~p.~5]{thorpe} (which Theorem is with respect to the Lorentzian curvature operator, not the Riemannian one that we are dealing with here; see our remark after the proof).  Suppose that $g$ is a $\hsl$-Einstein metric with respect to some choice of unit length vector field $T$ on $M$. Noting that $\vepo(\PL) = - \vepo(P)$ (this follows from the self-adjointness of $\hsl$), we have that
\beqa
\Tsec(\PL) \!\!&=&\!\! \vepo(\PL)\ipl{\co(\hsl P)}{\hsl P}\label{eqn:secsec}\\
&\overset{\eqref{eqn:p00}}{=}&\!\! -\vepo(P)\ipl{\hsl(\co P)}{\hsl P}\nonumber\\
&=&\!\! \vepo(P)\ipl{\co P}{P}\label{eqn:sa_1}\\
&=&\!\! \Tsec(P),\nonumber
\eeqa
where in \eqref{eqn:sa_1} we used the self-adjointness of $\hsl$ once again.
\end{proof}

It's natural to ask whether the converse is true\,---\,as it is in the fully Riemannian \cite{thorpe2} or fully Lorentzian \cite{thorpe} cases.  In fact it is not true here. Indeed, suppose that $\Tsec(\PL) = \Tsec(P)$ for all nondegenerate 2-planes $P$, and that we would like to conclude from this that $\hsl \circ \co = \co \circ \hsl$ (equivalently, that $\hsl \circ \co \circ \hsl = -\co$).  Using the self-adjointness of $\hsl$ once again, we would have
\beqa
\label{eqn:cont}
\underbrace{\,-\vepo(P) \ipl{\hsl \co \hsl\!P}{P}\,}_{\text{$\Tsec(\PL)$, via \eqref{eqn:secsec}}}\hspace{-.08in}\,\, = \underbrace{\,\vepo(P)\ipl{\co P}{P}\,}_{\text{$ \Tsec(P)$}}.
\eeqa
This implies that the operators $\hsl \circ\co \circ \hsl$ and $-\co$ have the same $\ipl{\,}{}$-quadratic forms on all nondegenerate 2-planes $P$.  Now, if two curvature operators both satisfy the algebraic Bianchi identity and have equivalent quadratic forms, then by a standard argument they must be equal (even if their quadratic forms are equal only on \emph{nondegenerate} 2-planes, by a continuity argument they would still be equal on all 2-planes; see \cite{thorpe}).  However, $\co$ does not satisfy the algebraic Bianchi identity with respect to $\ipl{\,}{}$, hence neither does $\hsl \circ \co \circ \hsl$. Moving on, we are now ready to classify $\hsl$-Einstein metrics, by defining a Riemannian version of Petrov Type:

\begin{defn}[Petrov Type of $\hsl$-Einstein metric]
\label{def:PT1}
Let $(M,g)$ be \emph{$\hsl$}-Einstein, with complex-linear curvature operator \emph{$\co\colon \cx \lra \cx$}.  Then $g$ has \emph{Petrov Type I, II, or III at $p \in M$} if \emph{$\co$} has, respectively, 3, 2, or 1 independent complex eigenvectors at $p$.
\end{defn}

Three distinct eigenvalues is the generic case; the remaining cases are ``algebraically special." Jordan normal-form theory now yields the following matrix characterization of Petrov Type (cf. \cite[Theorem,~p.~3]{thorpe}):
\begin{thm}
\label{thm:JordanP}
Let $(M,g)$ be \emph{$\hsl$}-Einstein.  At each $p \in M$, its complex-linear curvature operator \emph{$\co\colon \cx \lra \cx$} will be similar to one of
\beqa
\underbrace{\,\begin{bmatrix}
\lambda_1  & 0 & 0\\
0 & \lambda_2 & 0\\
0 & 0 & \lambda_3
\end{bmatrix}\,}_{\emph{\text{Type I}}}\comma
\underbrace{\,\begin{bmatrix}
\lambda_1  & 0 & 0\\
0 & \lambda_2 & 1\\
0 & 0 & \lambda_2
\end{bmatrix}\,}_{\emph{\text{Type II}}}\comma
\underbrace{\,\begin{bmatrix}
\lambda_1  & 1 & 0\\
0 & \lambda_1 & 1\\
0 & 0 & \lambda_1
\end{bmatrix}\,}_{\emph{\text{Type III}}},
\eeqa
with $\lambda_1, \lambda_2, \lambda_3 \in \mathbb{C}$ not necessarily distinct. 
\end{thm}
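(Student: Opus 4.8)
The plan is to reduce the statement entirely to the Jordan canonical form theorem over $\mathbb{C}$, applied to a complex vector space of complex dimension $3$. Recall from the discussion just preceding the theorem that, for an $\hsl$-Einstein metric, the commutation $\co \circ \hsl = \hsl \circ \co$ together with $\hsl^2 = -1$ turns $\Lambda^2(T_pM)$ into a complex vector space $\cx$ of complex dimension $3$\,---\,with, say, $\{\ww{e_1}{e_2},\ww{e_1}{e_3},\ww{e_1}{e_4}\}$ as a $\mathbb{C}$-basis\,---\,on which $\co$ acts as a $\mathbb{C}$-linear endomorphism. So at each $p \in M$ we have nothing more than a $\mathbb{C}$-linear map $\co \colon \cx \lra \cx$ of a $3$-dimensional complex vector space, and the entire assertion is a standard fact about such maps.

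The first step is to invoke that $\mathbb{C}$ is algebraically closed, so that $\co$ is similar, over $\mathbb{C}$, to a direct sum of Jordan blocks $J_{n_1}(\mu_1)\oplus\cdots\oplus J_{n_k}(\mu_k)$ with $n_1+\cdots+n_k = 3$. The second step is the enumeration of partitions of $3$: these are $1+1+1$, $2+1$, and $3$, producing respectively the diagonal matrix $\mathrm{diag}(\lambda_1,\lambda_2,\lambda_3)$ (Type I), the matrix $\lambda_1\oplus J_2(\lambda_2)$ (Type II), and the single block $J_3(\lambda_1)$ (Type III), exactly as displayed, with the $\lambda_i \in \mathbb{C}$ not assumed distinct. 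Since over an algebraically closed field two matrices are similar if and only if their Jordan forms agree up to a permutation of the blocks, these three are the only similarity classes. (Note that, unlike the trace-free Weyl operator in \cite{thorpe}, here there is no further constraint such as $\lambda_1+\lambda_2+\lambda_3 = 0$, since $\co$ need not be trace-free.)

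The third step is to reconcile this with Definition \ref{def:PT1}: the number of Jordan blocks of $\co$ equals the total dimension of its eigenspaces in $\cx$, i.e.\ the maximal number of $\mathbb{C}$-linearly independent eigenvectors of $\co$. This number is $3$ precisely in the $1+1+1$ case, $2$ precisely in the $2+1$ case, and $1$ precisely in the single-block case, so the trichotomy ``$3$, $2$, or $1$ independent complex eigenvectors'' of Definition \ref{def:PT1} matches the three normal forms, in that order.

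I expect no real obstacle: once the complex structure $i \defeq \hsl$ is in hand, the content is purely linear-algebraic. The only point deserving a little care is the bookkeeping of the last step\,---\,verifying that ``$k$ independent complex eigenvectors'' corresponds to ``$k$ Jordan blocks'' and hence to the $k$-th partition in the list\,---\,together with the observation that all three Types genuinely occur, there being no hidden constraint forcing diagonalizability as there would be were $\co$ self-adjoint with respect to $\ipl{\,}{}$, which (as emphasized in point i.\ after Definition \ref{def:*}) it is not.
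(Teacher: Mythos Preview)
Your proposal is correct and follows essentially the same approach as the paper: both reduce the statement to the Jordan normal form theorem for a $\mathbb{C}$-linear endomorphism of a $3$-dimensional complex space, enumerate the three possible block structures, and match them to the eigenvector counts in Definition~\ref{def:PT1}. The only difference is organizational\,---\,the paper starts from the eigenvector count and deduces the block structure, whereas you start from the partitions of $3$ and then match\,---\,but the content is identical.
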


\begin{proof}
If $\co$ has three linearly independent eigenvectors (Type I) at $p\in M$, then it is diagonalizable, with eigenvalues $\lambda_1,\lambda_2,\lambda_3 \in \mathbb{C}$ not necessarily distinct; this is the leftmost matrix.  If $\co$ has two linearly independent eigenvectors (Type II), then it is not diagonalizable and splits into two cases: 1) Two of its eigenvalues are equal, or 2) all three are equal. Since in Type II the total geometric multiplicity is 2, either case leads to two Jordan blocks and thus to the middle matrix above (if all three eigenvalues are equal, then $\lambda_1 = \lambda_2$). The final case, with only one linearly independent eigenvector (Type III), has geometric multiplicity 1, hence one Jordan block. 
\end{proof}

We would now like to relate a $\hsl$-Einstein metric's Petrov Type to its sectional curvature $\gsec$; before doing so, however, we will first prove a more general result by characterizing the critical points of $\Tsec$ (the reason for taking such a path will appear in Theorem \ref{thm:SS} below).  As we do so, recall once again that $\co$ is not self-adjoint with respect to $\ipl{\,}{}$.  In practice, this means that in an arbitrary $\gL$-orthonormal frame $\{e_1,e_2,e_3,e_4\}$ with timelike direction $e_1$ (we don't assume that $T = e_1$), the components
\beqa
\label{eqn:K}
K_{ijkl} \defeq -\ipl{\co(\ww{e_i}{e_j})}{\ww{e_k}{e_l}},
\eeqa
while satisfying $K_{jikl} = -K_{ijkl}$ and $K_{ijlk} = - K_{ijkl}$, are however not pairwise symmetric in general: $K_{ijkl} \neq K_{klij}$.  But if $e_1 = T$, then we can say more:

\begin{lemma}
\label{lemma:crit2}
With respect to any oriented \emph{$\gL$}-orthonormal basis of the form $\{T\defeq e_1,e_2,e_3,e_4\}$, the components \eqref{eqn:K} satisfy
\beqa
\label{eqn:the_same0}
\text{\emph{$K_{ijkl} = R_{ijkl}\,\vepo(\ww{e_k}{e_l})$}}.
\eeqa
Hence in such a basis,
\beqa
\label{eqn:the_same}
\text{\emph{$K_{ijkl} = K_{klij} \iff \vepo(\ww{e_i}{e_j}) = \vepo(\ww{e_k}{e_l})$}}.
\eeqa

\begin{proof}
The key is that such a basis is also $g$-orthonormal, hence by  \eqref{eqn:minus0},
$$
K_{ijkl} = -\ipl{\!\!\!\!\!\!\underbrace{\co(\ww{e_i}{e_j})}_{\text{$-R_{ijkl}\,\ww{e_k}{e_l} + \cdots$}}\!\!\!\!\!\!}{\ww{e_k}{e_l}} = R_{ijkl}\,\vepo(\ww{e_k}{e_l}).
$$
Since $R_{ijkl} = R_{klij}$, it's clear that $K_{ijkl} = K_{klij}$ if and only if $\vepo(\ww{e_i}{e_j}) = \vepo(\ww{e_k}{e_l})$.
\end{proof}

\end{lemma}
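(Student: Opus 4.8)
The plan is to reduce everything to the observation, already recorded in the discussion surrounding \eqref{eqn:gL}, that a $\gL$-orthonormal basis $\{e_1,e_2,e_3,e_4\}$ whose timelike leg is $e_1 = T$ is at the same time a $g$-orthonormal basis, and conversely. (If one wants it from scratch: $\gL(T,e_i)=0$ forces $g(T,e_i)=0$ for $i\geq 2$, since $\gL = g - 2T^\flat\otimes T^\flat$ and $g(T,T)=1$; then $g(e_i,e_j)=\gL(e_i,e_j)$ on the span of $e_2,e_3,e_4$, while $g(T,T)=\gL(T,T)+2=1$.) The consequence I want is that the associated basis \eqref{eqn:Hbasis0} of $\Lambda^2$ is orthonormal for \emph{both} induced inner products \eqref{eqn:ip} and \eqref{eqn:ipL} — the sole difference being that under $\ipr{\,}{}$ each of its six members has square norm $+1$, whereas under $\ipl{\,}{}$ the first three have square norm $-1$ and the last three $+1$; that is, the $\ipl{\,}{}$-norm of a basis $2$-vector $e_k\wedge e_l$ is exactly $\vepo(e_k\wedge e_l)$.

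With that in place, \eqref{eqn:the_same0} is a one-line expansion. Since \eqref{eqn:Hbasis0} is $\ipr{\,}{}$-orthonormal, \eqref{eqn:minus0} says that the coefficient of $e_k\wedge e_l$ in the expansion of $\co(e_i\wedge e_j)$ along \eqref{eqn:Hbasis0} is $\ipr{\co(e_i\wedge e_j)}{e_k\wedge e_l} = -R_{ijkl}$; thus $\co(e_i\wedge e_j) = -R_{ijkl}\,e_k\wedge e_l + (\text{terms along the other basis }2\text{-vectors})$. Pairing this identity with $e_k\wedge e_l$ under $\ipl{\,}{}$ rather than $\ipr{\,}{}$ annihilates the cross terms (the basis is $\ipl{\,}{}$-orthogonal) and rescales the surviving one by $\ipl{e_k\wedge e_l}{e_k\wedge e_l} = \vepo(e_k\wedge e_l)$, giving $\ipl{\co(e_i\wedge e_j)}{e_k\wedge e_l} = -R_{ijkl}\,\vepo(e_k\wedge e_l)$; the overall minus sign in \eqref{eqn:K} then yields $K_{ijkl} = R_{ijkl}\,\vepo(e_k\wedge e_l)$.

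Finally, \eqref{eqn:the_same} follows by combining \eqref{eqn:the_same0} with the pair symmetry $R_{ijkl}=R_{klij}$: one has $K_{klij} = R_{klij}\,\vepo(e_i\wedge e_j) = R_{ijkl}\,\vepo(e_i\wedge e_j)$, so $K_{ijkl}=K_{klij}$ amounts to $R_{ijkl}\big(\vepo(e_k\wedge e_l)-\vepo(e_i\wedge e_j)\big)=0$, i.e.\ (modulo the trivial case $R_{ijkl}=0$, in which both sides already vanish) to $\vepo(e_i\wedge e_j)=\vepo(e_k\wedge e_l)$.

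There is no genuine obstacle here; the whole argument is bookkeeping with two inner products on $\Lambda^2$ that happen to share a basis but equip it with different signatures. The single point that warrants care — and the only place a sign error could slip in — is to notice that the $g$-coefficients in the expansion of $\co(e_i\wedge e_j)$ are $-R_{ijkl}$ \emph{independently} of which inner product one later uses to extract them, so that the factor $\vepo(e_k\wedge e_l)$ is introduced only at the moment of pairing against $\ipl{\,}{}$, and not before.
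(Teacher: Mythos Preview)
Your proof is correct and follows essentially the same approach as the paper's: both observe that the basis is simultaneously $g$- and $\gL$-orthonormal, expand $\co(e_i\wedge e_j)$ along \eqref{eqn:Hbasis0} via \eqref{eqn:minus0}, and then pair with $\ipl{\,}{}$ to pick up the sign $\vepo(e_k\wedge e_l)$. Your treatment is in fact slightly more careful than the paper's in flagging the degenerate case $R_{ijkl}=0$ for the biconditional \eqref{eqn:the_same}.
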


Now we are in a position to characterize the critical points of $\Tsec$:

\begin{prop}
\label{prop:crit}
Let $(M,g)$ be an oriented Riemannian 4-manifold and $T$ a unit length vector field on $M$.  Let \emph{$\Tsec$} be  the $L$-sectional curvature of $g$ with respect to \emph{$\gL \defeq g - 2T^\flat \otimes T^\flat$}.  Then at any $p \in M$, $P \in G_+(p)$ is a critical point of \emph{$\Tsec$} if and only if with respect to any oriented orthonormal frame $\{e_1,e_2,e_3,e_4\}$ of $T_pM$ with timelike direction $e_1$ and with $P = \ww{e_3}{e_4}$,
\beqa
\label{eqn:KK+}
K_{34ij} = -K_{ij34} \comma \text{$(i,j) \neq (3,4)$ or $(1,2)$},
\eeqa
with $K_{ijkl}$ given by \eqref{eqn:K}. 
Similarly, for $P \in G_-(p)$ with $P = \ww{e_1}{e_2}$, 
\beqa
\label{eqn:KK-}
K_{12ij} = -K_{ij12} \comma \text{$(i,j) \neq (3,4)$ or $(1,2)$}.
\eeqa
If $e_1 = T$, then $P = \ww{e_3}{e_4}$ is a critical point of \emph{$\Tsec \Leftrightarrow R_{3442} = R_{3423} = 0$}, and $P = \ww{e_1}{e_2}$ is a critical point of \emph{$\Tsec \Leftrightarrow R_{1213} = R_{1214} = 0$}.
\end{prop}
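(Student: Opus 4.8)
The plan is to express ``$P$ is a critical point of $\Tsec$'' as the vanishing of the differential of $\Tsec$ on the relevant $2$-Grassmannian and to make that condition explicit in a $\gL$-orthonormal frame adapted to $P$. Begin with $P\in G_+(p)$. The nondegenerate $2$-planes form an open subset of the Grassmannian on which $\vepo$ is locally constant, so $\vepo\equiv+1$ near $P$ in $G_+(p)$, and there $\Tsec$ coincides with the restriction to $G_+(p)$ of the quadratic function $q\colon\Lambda^2\lra\RR$, $q(\xi)\defeq\ipl{\co\xi}{\xi}$. Hence, for every $\eta$ in the tangent space $T_P(G_+(p))$,
$$
d\Tsec|_P(\eta)\;=\;dq|_P(\eta)\;=\;\ipl{\co\eta}{P}+\ipl{\co P}{\eta}.
$$
The essential point is that both summands must be kept: $\co$ is \emph{not} $\ipl{\,}{}$-self-adjoint (see the remarks following Definition \ref{def:*}), so $\ipl{\co\eta}{P}\ne\ipl{\co P}{\eta}$ in general. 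This is precisely why the criterion below will be an \emph{antisymmetry}, $K_{34ij}=-K_{ij34}$, rather than the outright vanishing $K_{34ij}=0$ that one gets for the $\ipl{\,}{}$-self-adjoint Lorentzian curvature operator $\col$ — and hence why the argument in \cite{thorpe} does not transfer verbatim. So $P\in G_+(p)$ is critical if and only if $\ipl{\co\eta}{P}+\ipl{\co P}{\eta}=0$ for all $\eta\in T_P(G_+(p))$, and likewise for $P\in G_-(p)$ (where $\vepo\equiv-1$ near $P$, an overall sign that does not affect the vanishing).

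Next I identify $T_P(G_+(p))$ inside $\Lambda^2$. Since $\vepo(\PL)=-\vepo(P)=-1$ by the $\ipl{\,}{}$-self-adjointness of $\hsl$ (as in the proof of Theorem \ref{thm:secsec}), the plane $\PL=\hsl P$ is Lorentzian; choose a unit timelike $e_1$ and unit spacelike $e_2$ in it, and an oriented orthonormal basis $e_3,e_4$ of $P$, so that $\{e_1,e_2,e_3,e_4\}$ is a $\gL$-orthonormal frame with $e_1$ timelike, $P=\ww{e_3}{e_4}$, and $\hsl P=\ww{e_1}{e_2}$ up to sign. Near $P$, $G_+(p)$ is cut out in $\Lambda^2$ by the equations $\ipl{\xi}{\xi}=1$ and $\ipl{\xi}{\hsl\xi}=0$ (decomposability, recalled after Definition \ref{defn:Pstuff}); differentiating these two constraints (whose differentials at $P$ are $2\ipl{P}{\cdot}$ and $2\ipl{\hsl P}{\cdot}$, the latter using self-adjointness of $\hsl$) identifies $T_P(G_+(p))$ with the $\gL$-orthogonal complement in $\Lambda^2$ of $\text{span}\{P,\hsl P\}$, which in the basis \eqref{eqn:Hbasis0} is exactly $\text{span}\{\ww{e_1}{e_3},\ww{e_1}{e_4},\ww{e_2}{e_3},\ww{e_2}{e_4}\}$. (Equivalently, one exhibits four curves in $G_+(p)$ through $P$ — spatial rotations of $e_3$ and of $e_4$ toward $e_2$, and $\gL$-boosts toward $e_1$ — whose velocities at $P$ are these four $2$-vectors.) Plugging $\eta=\ww{e_i}{e_j}$ with $\{i,j\}\in\{\{1,3\},\{1,4\},\{2,3\},\{2,4\}\}$ into the criterion from the first paragraph, and using $K_{ijkl}=-\ipl{\co(\ww{e_i}{e_j})}{\ww{e_k}{e_l}}$, yields $K_{34ij}+K_{ij34}=0$ for these four pairs; the remaining orderings follow from $K_{jikl}=-K_{ijkl}=K_{ijlk}$, giving exactly \eqref{eqn:KK+}. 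The pairs $(3,4)$ and $(1,2)$ are excluded precisely because $P$ and $\hsl P$ are not tangent to $G_+(p)$ at $P$. The case $P\in G_-(p)$ with $P=\ww{e_1}{e_2}$ runs word-for-word the same: now $\PL=\hsl P=-\ww{e_3}{e_4}$ is positive-definite, $T_P(G_-(p))$ is again the span of the four mixed $2$-vectors, and one obtains \eqref{eqn:KK-}. As the derivation uses an arbitrary $\gL$-orthonormal frame of the stated form, the criterion holds with respect to any such frame.

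Finally, suppose $e_1=T$. Then Lemma \ref{lemma:crit2} gives $K_{ijkl}=R_{ijkl}\,\vepo(\ww{e_k}{e_l})$, and since $T=e_1$ is timelike while $e_2,e_3,e_4$ are spacelike, $\vepo(\ww{e_1}{e_a})=-1$ for $a=2,3,4$ and $\vepo(\ww{e_a}{e_b})=+1$ for $a,b\in\{2,3,4\}$. For $P=\ww{e_3}{e_4}$, using $\vepo(\ww{e_3}{e_4})=1$ and $R_{ij34}=R_{34ij}$, condition \eqref{eqn:KK+} becomes $R_{34ij}\big(\vepo(\ww{e_i}{e_j})+1\big)=0$; this is automatic for $\{i,j\}=\{1,3\},\{1,4\}$ and reduces to $R_{3423}=0$ and $R_{3424}=0$ (equivalently $R_{3442}=0$) for $\{i,j\}=\{2,3\},\{2,4\}$. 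For $P=\ww{e_1}{e_2}$, using $\vepo(\ww{e_1}{e_2})=-1$, condition \eqref{eqn:KK-} becomes $R_{12ij}\big(\vepo(\ww{e_i}{e_j})-1\big)=0$, which is automatic for $\{i,j\}=\{2,3\},\{2,4\}$ and reduces to $R_{1213}=R_{1214}=0$. The only genuinely delicate step is the identification of $T_P(G_\pm(p))$ with the span of the four mixed $2$-vectors — i.e., ensuring the admissible variations are exactly those keeping $P$ decomposable \emph{and} of the correct unit length, and that all of them are accounted for; everything after that is bookkeeping with the antisymmetries of $K_{ijkl}$ and $R_{ijkl}$ and the signs $\vepo(\ww{e_i}{e_j})$.
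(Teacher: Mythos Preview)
Your proof is correct, and it takes a somewhat different route from the paper's. The paper constructs an explicit coordinate chart $\phi\colon\RR^4\to G_+(p)$ near $P=\ww{e_3}{e_4}$, namely
\[
\phi(x_1,x_2,x_3,x_4)=\frac{\ww{(e_3+x_3e_1+x_4e_2)}{(e_4+x_1e_1+x_2e_2)}}{\sqrt{1-x_1^2+x_2^2-x_3^2+x_4^2-(x_2x_3-x_1x_4)^2}},
\]
and then computes each partial derivative $\partial_{x_i}(\Tsec\circ\phi)|_{\mathbf 0}$ by hand, obtaining the four relations $K_{34ij}+K_{ij34}=0$ one at a time. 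Your argument instead identifies $T_P(G_+(p))$ intrinsically as the $\ipl{\,}{}$-orthogonal complement of $\text{span}\{P,\hsl P\}$ by differentiating the defining constraints $\ipl{\xi}{\xi}=1$ and $\ipl{\xi}{\hsl\xi}=0$, and then evaluates the differential $dq|_P(\eta)=\ipl{\co\eta}{P}+\ipl{\co P}{\eta}$ on the resulting basis. The two approaches are equivalent\,---\,the paper's $\partial_{x_i}\phi|_{\mathbf 0}$ are exactly (up to sign) your four mixed bivectors $\ww{e_i}{e_j}$\,---\,but yours makes the structure more transparent: it explains at once \emph{why} there are four conditions, \emph{why} the pairs $(3,4)$ and $(1,2)$ are excluded (they are normal, not tangent, to $G_+(p)$ at $P$), and \emph{why} the criterion is an antisymmetry rather than a vanishing (the two summands in $dq$ persist because $\co$ is not $\ipl{\,}{}$-self-adjoint). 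The paper's chart, on the other hand, has the virtue of being entirely elementary and of matching Thorpe's original computation in \cite{thorpe}. For the $e_1=T$ case your reduction via $R_{34ij}(\vepo(\ww{e_i}{e_j})+1)=0$ is also a bit more direct than the paper's, which chases the identities \eqref{eqn:the_same0}--\eqref{eqn:the_same} pairwise; both arrive at the same two surviving conditions.
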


\begin{proof}
We employ the same coordinate chart $\phi$ as in the proof of \cite[Lemma,~p.~5]{thorpe}, though  with minor index changes.  Given a critical 2-plane $P \in G_+(p)$, we may express it as $P = \ww{e_3}{e_4}$ for two spacelike $\gL$-orthonormal vectors $e_3,e_4$, and then extend these to a $\gL$-orthonormal basis $\{e_1,e_2,e_3,e_4\}$ for $T_pM$ with $e_1$ timelike (we do not assume that $e_1=T$, or even that $g$ is $\hsl$-Einstein).  Now define a coordinate chart $\phi$ from $\RR^4$ into $G_+(p)$ by
$$
\phi(x_1,x_2,x_3,x_4) \defeq \frac{\ww{(e_3 + x_3e_1+x_4e_2)}{(e_4+ x_1e_1 + x_2e_2)}}{\sqrt{1-x_1^2+x_2^2-x_3^2+x_4^2-(x_2x_3-x_1x_4)^2}}\cdot
$$
(Note that the 2-Grassmannians $G_{\pm}(p) \subseteq \Lambda^2$ each have dimension $2(4-2) =4$. Note also that $\phi({\bf 0})  = P$, that $\phi$ is a diffeomorphism on a neighborhood of ${\bf 0} \in \RR^4$, and that $\ipl{\phi(x)}{\phi(x)} = +1$ for every $x = (x_1,x_2,x_3,x_4)$ in  its domain.)  Then $P$ is a critical point of $\Tsec$ if and only if
$$
\frac{\partial (\Tsec \circ \phi)}{\partial x_i}\bigg|_{\bf 0} = 0 \comma i=1,2,3,4.
$$
As we now show, each partial derivative will yield one of the cases in \eqref{eqn:KK+}.  To compute the case $i=1$, let us set $x_2=x_3=x_4=0$ from the outset, which simplifies the computation:
\beqa
\underbrace{\,\frac{\partial}{\partial x_1}\bigg|_{\bf 0}\!\!(\Tsec \circ \phi){\scriptstyle \Big|_{\text{$(x_1,0,0,0)$}}}\,}_{0} \!\!\!\!\!\!\!\!&=&\!\!\!\! \frac{\partial}{\partial x_1}\bigg|_{\bf 0}\Tsec\bigg(\frac{\ww{e_3}{(e_4+ x_1e_1)}}{\sqrt{1-x_1^2}}\bigg)\nonumber\\
&=&\!\!\!\! \frac{\partial}{\partial x_1}\bigg|_{\bf 0}\frac{\ipl{\co(\ww{e_3}{e_4})+x_1\co(\ww{e_3}{e_1})}{\ww{e_3}{(e_4+x_1e_1)}}}{1-x_1^2}\nonumber\\
&\overset{\eqref{eqn:K}}{=}&\!\!\!\! -\frac{\partial}{\partial x_1}\bigg|_{\bf 0} \frac{K_{3434}+x_1(K_{3431}+K_{3134})+x_1^2K_{3131}}{1-x_1^2}\nonumber\\
&=&\!\!\!\! -(K_{3431}+K_{3134}),\nonumber
\eeqa
so that $K_{3431} =  -K_{3134}$.  For $i=2$,
\beqa
\underbrace{\,\frac{\partial}{\partial x_2}\bigg|_{\bf 0}\!\!(\Tsec \circ \phi){\scriptstyle \Big|_{\text{$(0,x_2,0,0)$}}}\,}_{0} \!\!\!\!\!\!\!\!&=&\!\!\!\! \frac{\partial}{\partial x_2}\bigg|_{\bf 0}\Tsec\bigg(\frac{\ww{e_3}{(e_4+ x_2e_2)}}{\sqrt{1+x_2^2}}\bigg)\nonumber\\
&=&\!\!\!\! \frac{\partial}{\partial x_2}\bigg|_{\bf 0}\frac{\ipl{\co(\ww{e_3}{e_4})+x_2\co(\ww{e_3}{e_2})}{\ww{e_3}{(e_4+x_2e_2)}}}{1+x_2^2}\nonumber\\
&\overset{\eqref{eqn:K}}{=}&\!\!\!\! -\frac{\partial}{\partial x_2}\bigg|_{\bf 0} \frac{K_{3434}+x_2(K_{3432}+K_{3234})+x_2^2K_{3232}}{1+x_2^2}\nonumber\\
&=&\!\!\!\! -(K_{3432}+K_{3234}).\nonumber
\eeqa
The cases $i=3,4$ likewise yield $K_{3414} = -K_{1434}$ and $K_{3424}= -K_{2434}$, respectively.  Similar computations with $
\phi \defeq \frac{\ww{(e_1 + x_3e_3+x_4e_4)}{(e_2+ x_1e_3 + x_2e_4)}}{\sqrt{1+x_1^2+x_2^2-x_3^2-x_4^2-(x_2x_3-x_1x_4)^2}}
$ yield  \eqref{eqn:KK-} for $P = \ww{e_1}{e_2} \in G_-(p)$. Finally, if a critical 2-plane $P \in G_+(p)$ is  orthogonal to $T$, meaning that it can be expressed as $P = \ww{e_3}{e_4}$ with respect to an oriented $\gL$-orthonormal basis $\{T = e_1,e_2,e_3,e_4\}$, then by Lemma \ref{lemma:crit2}
$$
K_{4234} \overset{\eqref{eqn:the_same}}{=} K_{3442} \overset{\eqref{eqn:KK+}}{=} -K_{4234} \imp R_{4234} \overset{\eqref{eqn:the_same0}}{=}  0.
$$
Likewise, $R_{3423} = 0$.  Similarly, if a critical 2-plane $P  \in G_-(p)$ can be expressed as $P = \ww{e_1}{e_2}$ with respect to $\{T = e_1,e_2,e_3,e_4\}$\,---\,i.e., if $P$ contains $T$\,---\,then
$$
K_{1213} \overset{\eqref{eqn:the_same}}{=} K_{1312} \overset{\eqref{eqn:KK-}}{=} -K_{1213} \imp R_{1213} \overset{\eqref{eqn:the_same0}}{=}  0.
$$
Likewise, $R_{1214} = 0$.  This completes the proof.
\end{proof}

In general, critical points of $\Tsec$ and $\gsec$ needn't coincide.  That is because, as shown in \cite{thorpe2}, any 2-plane $P$ will be critical for $\gsec$ if and only if
\beqa
\label{eqn:thorpecrit}
\co P = aP+b(\hsr\hspace{.01in}P)
\eeqa
for some $a,b \in \RR$ (this also follows from Proposition \ref{prop:crit}, by replacing $\Tsec$ with $\gsec$ and $K_{ijkl}$ with $R_{ijkl}$).  However, \eqref{eqn:thorpecrit} does suggest a relationship with the \emph{eigenvectors} of $\co\colon \cx \lra \cx$, and thus the Petrov Type of $g$:

\begin{thm}
\label{thm:n}
Let $(M,g)$ be a \emph{$\hsl$}-Einstein manifold.  If $P \in G_{+}(p) \cup G_-(p)$ is orthogonal to $T$ or contains $T$, then $P$ is an eigenvector of \emph{$\co\colon \cx \lra \cx$} if and only if $P$ is a critical point of \emph{$\gsec$}.
\end{thm}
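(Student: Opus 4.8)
The plan is to translate both conditions in the equivalence into statements about where $\co P$ lies inside $\cx$, and then to observe that on the two allowed families of $2$-planes these statements coincide because the Lorentzian Hodge dual $\hsl P$ and the Riemannian Hodge dual $\hsr P$ differ only by a sign.

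First I would reformulate ``eigenvector''. Since $g$ is $\hsl$-Einstein, $\co\colon\cx\lra\cx$ is complex-linear, with complex structure $i\,\xi = \hsl\,\xi$ given by \eqref{eqn:complex}. A nonzero $\xi\in\cx$ is a complex eigenvector of $\co$ exactly when $\co\,\xi$ lies in the complex line $\mathbb{C}\xi$; and because $\hsl^2=-1$ forces $\xi$ and $\hsl\xi$ to be $\RR$-linearly independent, that line is the real plane $\{a\xi + b\,\hsl\xi : a,b\in\RR\}$. As $P\in G_+(p)\cup G_-(p)$ is a unit decomposable $2$-vector, hence nonzero, this says: $P$ is an eigenvector of $\co\colon\cx\lra\cx$ iff $\co P \in \text{span}_{\RR}\{P,\hsl P\}$. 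On the other hand, the criterion \eqref{eqn:thorpecrit} states that $P$ is a critical point of $\gsec$ iff $\co P = aP + b(\hsr P)$ for some $a,b\in\RR$, i.e. iff $\co P\in\text{span}_{\RR}\{P,\hsr P\}$. So it suffices to prove $\text{span}_{\RR}\{P,\hsl P\} = \text{span}_{\RR}\{P,\hsr P\}$ for the $2$-planes in the hypothesis.

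For this I would simply evaluate the two Hodge stars. If $P$ contains $T$, pick an oriented $\gL$-orthonormal frame $\{T\defeq e_1,e_2,e_3,e_4\}$ with $P=\ww{e_1}{e_2}$ (possible since any $\gL$-unit vector $\gL$-orthogonal to $T$ is spacelike); then \eqref{eqn:minus} gives $\hsl P=-\ww{e_3}{e_4}$ while \eqref{eqn:oring} gives $\hsr P=\ww{e_3}{e_4}$, so $\hsl P=-\hsr P$. If instead $P$ is orthogonal to $T$, pick $\{T\defeq e_1,e_2,e_3,e_4\}$ with $P=\ww{e_3}{e_4}$; then \eqref{eqn:minus} and \eqref{eqn:oring} give $\hsl P=\ww{e_1}{e_2}=\hsr P$. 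In either case $\hsl P = \pm\,\hsr P$, so the two real $2$-planes coincide and the two reformulated conditions are literally the same, which proves the theorem. I do not expect a real obstacle here; the only point needing care is the faithful passage from ``complex eigenvector of $\co$'' to ``$\co P$ lies in the complex line through $P$'' (and the remark that this line equals $\text{span}_{\RR}\{P,\hsl P\}$), after which all the content is packed into the sign identity $\hsl P=\pm\,\hsr P$ for $T$-adapted $2$-planes, the rest being bookkeeping with \eqref{eqn:Hbasis} and \eqref{eqn:oring}.
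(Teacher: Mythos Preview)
Your proposal is correct and takes essentially the same approach as the paper: both arguments hinge on the observation that for a $2$-plane $P$ containing $T$ or orthogonal to $T$ one has $\hsl P = \pm\,\hsr P$, so that the complex-eigenvector condition $\co P = aP + b\,\hsl P$ and the critical-point criterion \eqref{eqn:thorpecrit} $\co P = aP + b\,\hsr P$ become equivalent. Your reformulation in terms of the real spans $\text{span}_{\RR}\{P,\hsl P\}$ and $\text{span}_{\RR}\{P,\hsr P\}$ is a clean way to package the same computation the paper carries out directly with the eigenvalue $a+ib$.
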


\begin{proof}
Suppose $P \in G_+(p)$ is orthogonal to $T$ and write $P = \ww{e_3}{e_4}$, where $e_3,e_4$ are part of an oriented $g$- and $\gL$-orthonormal basis $\{T\defeq e_1,e_2,e_3,e_4\}$; by \eqref{eqn:Hbasis} and \eqref{eqn:oring}, $i P = \hsl P = \ww{e_1}{e_2} = \hsr P$.  Hence, if $P$ is an eigenvector of $\co$ with eigenvalue $a+ib \in \mathbb{C}$, then
$$
\co P = (a+ib)P = aP+b(\hsr P).
$$
By \eqref{eqn:thorpecrit}, $P$ is therefore a critical point of $\gsec$; the converse is obvious.  (In fact $P$ is also critical for $\Tsec$.) Similarly, if $P \in G_-(p)$ contains $T$ and is an eigenvector of $\co$, then writing $P = \ww{e_1}{e_2}$, we would now have $\co P = (a+ib)P = aP-b(\hsr P)$.
\end{proof}

Let us now generalize $\hsl$-Einstein metrics by extending Definition \ref{def:PT1} to \emph{any} oriented Riemannian 4-manifold admitting a nowhere vanishing vector field (hence $\chi(M) = 0$ if $M$ is compact).  This is easy to accomplish: For any choice of Lorentzian $\hsl$, the curvature operator $\co$ of $g$ will decompose as
\beqa
\label{eqn:SS}
\co = \underbrace{\,\frac{1}{2}(\co - \hsl \circ \co \circ \hsl)\,}_{\text{``$\cS\,$"}} + \underbrace{\,\frac{1}{2}(\co + \hsl \circ \co \circ \hsl)\,}_{\text{``$\cA$"}},
\eeqa
where the ``symmetric" and ``anti-symmetric" operators $\cS$ and $\cA$ satisfy
$\cS \circ \hsl = \hsl \circ \cS$ and $\cA \circ \hsl = -\hsl \circ \cA$, respectively.  In particular, $\cS$ is a complex-linear map on $\cx$, and $\co$ is $\hsl$-Einstein if and only if $\co = \cS$.
(As remarked in the Introduction, for $\co, \hsr$ we have $\cS \defeq \frac{1}{2}(\co + \hsr \circ \co \circ \hsr) = \cW + \frac{\text{scal}_{\scalebox{0.5}{\emph{g}}}}{12}I$; recall \eqref{eqn:Wpm0}-\eqref{eqn:Wpm}.)  It is now an easy matter to extend Definition \ref{def:PT1} to any oriented Riemannian 4-manifold $(M,g)$ with $\chi(M) = 0$:

\begin{defn}[Petrov Type, general version]
\label{def:PT2}
Let $(M,g)$ be an oriented Riemannian 4-manifold with curvature operator \emph{$\co$}, let $T$ be a unit length vector field on $M$, and let \emph{$\hsl$} be the Hodge star operator of the  Lorentzian metric \emph{$\gL \defeq g - 2T^{\flat} \otimes T^{\flat}$}.  At any $p \in M$, $g$ has \emph{Petrov Type I, II, or III} if the operator \emph{$\cS \defeq \frac{1}{2}(\co - \hsl \circ \co \circ \hsl)$}, viewed as a complex-linear map \emph{$\cS\colon \cx\lra \cx$}, has 3, 2, or 1 independent eigenvectors at $p$, respectively.
\end{defn}

\begin{thm}
\label{thm:SS}
Defining the \emph{$L$-sectional curvature of} \emph{$\cS$}, in analogy with \eqref{def:T_sec}, to be
$$
\text{\emph{$\Ssec(P) \defeq \vepo(P)\ipl{\cS P}{P},$}}
$$
the analogues of Theorems \ref{thm:secsec} and \ref{thm:JordanP} remain true with respect to \emph{$\cS$} and \emph{$\Ssec$}.  Furthermore, if $P \in G_+(p) \cup G_-(p)$ is an eigenvector of \emph{$\cS\colon \cx \lra \cx$} that is orthogonal to $T$ or contains $T$, then $P$ is a critical point of \emph{$\Ssec$}.
\end{thm}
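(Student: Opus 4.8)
The plan is to observe that Theorem~\ref{thm:SS} is, in each of its three assertions, a direct adaptation of a theorem already proved for the operator $\co$, with the only new ingredient being that $\cS$ enjoys exactly the two structural properties of $\co$ that were actually used in those earlier proofs: self-adjointness with respect to $\ipl{\,}{}$, and commutativity with $\hsl$. So the first step is to verify these two properties of $\cS \defeq \frac{1}{2}(\co - \hsl \circ \co \circ \hsl)$. Commutativity with $\hsl$ is immediate from $\hsl^2 = -1$ and was already recorded just before Definition~\ref{def:PT2}. For self-adjointness, I would note that $\co$ is $\ipl{\,}{}$-self-adjoint iff the block form \eqref{eqn:co2} holds (i.e. $A$, $D$ symmetric); and since $\hsl$ is $\ipl{\,}{}$-self-adjoint, the map $\xi \mapsto \hsl \circ \co \circ \hsl\,\xi$ is the composite of three $\ipl{\,}{}$-self-adjoint operators in the palindromic order $\hsl\,\co\,\hsl$, hence is itself $\ipl{\,}{}$-self-adjoint; therefore $\cS$, being a real linear combination of two $\ipl{\,}{}$-self-adjoint operators, is $\ipl{\,}{}$-self-adjoint. (Concretely, in the basis \eqref{eqn:Hbasis0}, $\cS$ has the block form $-\!\begin{bmatrix} A & B \\ -B & A\end{bmatrix}$ with $A$ symmetric and $B$ antisymmetric, which is \eqref{eqn:p1}; one can read all three claims off this.)

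The second step handles the analogue of Theorem~\ref{thm:secsec}: for a $\hsl$-Einstein metric, $\Tsec(\PL)=\Tsec(P)$. Inspecting that proof, the only facts used were $\vepo(\PL) = -\vepo(P)$ (a consequence of $\hsl$ being $\ipl{\,}{}$-self-adjoint, hence unchanged here), the commuting relation $\co\circ\hsl = \hsl\circ\co$ (now replaced by $\cS\circ\hsl = \hsl\circ\cS$, valid for \emph{any} $g$), and the $\ipl{\,}{}$-self-adjointness of $\hsl$ used twice. Running the identical chain of equalities with $\cS$ in place of $\co$ and $\Ssec$ in place of $\Tsec$ gives $\Ssec(\PL) = \Ssec(P)$ for all $P \in G_+(p)\cup G_-(p)$, with no hypothesis on $g$ beyond the existence of the unit field $T$. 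The third step handles the analogue of Theorem~\ref{thm:JordanP}: since $\cS\colon\cx\lra\cx$ is complex-linear (by commutativity with $\hsl=i$), Jordan normal form theory over $\mathbb{C}$ applies verbatim to the $3$-dimensional complex operator $\cS$, giving the same three normal forms sorted by the number $3$, $2$, or $1$ of independent complex eigenvectors, exactly as in the proof of Theorem~\ref{thm:JordanP}.

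The final step is the critical-point statement: if $P \in G_+(p)\cup G_-(p)$ is an eigenvector of $\cS\colon\cx\lra\cx$ and $P$ is orthogonal to $T$ or contains $T$, then $P$ is a critical point of $\Ssec$. Here I would re-run Proposition~\ref{prop:crit} with $\cS$ replacing $\co$: define $K^{\cS}_{ijkl} \defeq -\ipl{\cS(\ww{e_i}{e_j})}{\ww{e_k}{e_l}}$ and repeat the coordinate-chart computation verbatim (the chart $\phi$ and its derivatives don't depend on which operator we feed in), obtaining that $P = \ww{e_3}{e_4}$ is critical for $\Ssec$ iff $K^{\cS}_{34ij} = -K^{\cS}_{ij34}$ for $(i,j)\neq(3,4),(1,2)$, and similarly for $P=\ww{e_1}{e_2}$. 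Then, following the proof of Theorem~\ref{thm:n}: when $P = \ww{e_3}{e_4}$ is orthogonal to $T$ (so $e_1 = T$), we have $\hsl P = \ww{e_1}{e_2} = \hsr P$ by \eqref{eqn:Hbasis} and \eqref{eqn:oring}, so if $\cS P = (a+ib)P$ then $\cS P = aP + b(\hsr P)$, which is precisely the condition \eqref{eqn:thorpecrit} (with $\cS$ in place of $\co$) characterizing criticality of the $\cS$-quadratic form on decomposable $2$-vectors; alternatively, expand $\cS P$ in the basis \eqref{eqn:Hbasis0} and check it satisfies the $K^{\cS}$-conditions directly. The case $P = \ww{e_1}{e_2}\ni T$ is symmetric, with $\hsl P = -\hsr P$. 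The main obstacle is essentially bookkeeping: one must be careful that when $g$ is \emph{not} $\hsl$-Einstein the operator $\cS$ still may fail the algebraic Bianchi identity with respect to $\ipl{\,}{}$ (just as $\co$ does), so one cannot upgrade $\Ssec(\PL)=\Ssec(P)$ to a converse statement — but no converse is claimed, so this does not actually obstruct the proof, only warns against over-claiming. The one genuinely nontrivial check is that the coordinate-chart derivative computation of Proposition~\ref{prop:crit} only ever used that the $K$-components are antisymmetric in their first and last pairs — which $K^{\cS}$ inherits automatically — and never used pairwise symmetry $K_{ijkl}=K_{klij}$, so the computation goes through for $\cS$ unchanged.
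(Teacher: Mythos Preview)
Your treatment of the first two assertions—the analogues of Theorems~\ref{thm:secsec} and~\ref{thm:JordanP}—is correct and matches the paper's one-line remark that they ``follow exactly as before.'' The gap is in the critical-point claim, and it stems from a false premise: you assert that $\cS$ is $\ipl{\,}{}$-self-adjoint via the palindrome $\hsl\,\co\,\hsl$, but $\co$ itself is \emph{not} $\ipl{\,}{}$-self-adjoint (the paper states this explicitly in item~i.\ following Definition~\ref{def:*}; concretely, in the basis \eqref{eqn:Hbasis0} self-adjointness with respect to the signature-$(3,3)$ form $\ipl{\,}{}$ would force $B=0$ in \eqref{eqn:co2}, not merely $A,D$ symmetric). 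Hence $\cS$ is not $\ipl{\,}{}$-self-adjoint either, and the components $S_{ijkl}\defeq-\ipl{\cS(\ww{e_i}{e_j})}{\ww{e_k}{e_l}}$ do not satisfy $S_{ijkl}=S_{klij}$. The eigenvector equation $\cS(\ww{e_3}{e_4})=(a+ib)(\ww{e_3}{e_4})$ therefore gives you only $S_{34ij}=0$ for the four relevant $(i,j)$; the criticality criterion from the $\cS$-version of Proposition~\ref{prop:crit} is $S_{34ij}=-S_{ij34}$, and you have not established $S_{ij34}=0$. Your appeal to \eqref{eqn:thorpecrit} does not close this: that condition characterizes criticality of the $\ipr{\,}{}$-quadratic form (this is where the pair symmetry $R_{ijkl}=R_{klij}$ enters), whereas $\Ssec$ is built from $\ipl{\,}{}$—a different functional with a different critical set.

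The paper supplies exactly the missing computation. It derives $S_{ijkl}=\tfrac12(K_{ijkl}-\epsilon_{ij}\epsilon_{kl}K_{\hsl(ij)\hsl(kl)})$ and then, using Lemma~\ref{lemma:crit2} (available because $e_1=T$), rewrites each $K$ as $R$ times a sign. From $S_{3413}=0$ it reads off $R_{3413}=R_{1242}$; feeding this into the formula for $S_{1334}$ yields $S_{1334}=\tfrac12(R_{1334}-R_{4212})=0$. The remaining three $S_{ij34}$ vanish by the same mechanism. This step—deducing the vanishing of the ``row'' components $S_{ij34}$ from that of the ``column'' components $S_{34ij}$ via the $\hsl$-symmetrized formula for $S$—is the substantive content your ``check directly'' elides.
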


\begin{proof}
The analogues of Theorems \ref{thm:secsec} and \ref{thm:JordanP} follow exactly as before. As for the eigenvectors of $\cS$, first observe that the components $S_{ijkl}$ of $\cS$ satisfy
\beqa
S_{ijkl} \!\!&\defeq&\!\! -\ipl{\cS(\ww{e_i}{e_j})}{\ww{e_k}{e_l}}\nonumber\\
&=&\!\! -\frac{1}{2}\ipl{(\co - \hsl \circ \co \circ \hsl)(\ww{e_i}{e_j})}{\ww{e_k}{e_l}}\nonumber\\
&\overset{\eqref{eqn:K}}{=}&\!\! \frac{1}{2}\Big(K_{ijkl} + \underbrace{\,\ipl{(\co \circ \hsl)(\ww{e_i}{e_j})}{\hsl(\ww{e_k}{e_l})}\,}_{\text{``$-\epsilon_{ij}\epsilon_{kl}K_{\hsl(ij)\hsl(kl)}$"}}\Big),\nonumber
\eeqa
where $\epsilon_{ab} \defeq \vepo(\ww{e_a}{e_b})$. Thus
\beqa
\label{eqn:Scomp}
S_{ijkl} = \frac{1}{2}(K_{ijkl}-\epsilon_{ij}\epsilon_{kl}K_{\hsl(ij)\hsl(kl)}).
\eeqa
Now suppose that $P \in G_+(p)$ is an eigenvector of $\cS$ that is $\gL$-orthogonal to $T$, and once again write $T \defeq e_1$ and $P = \ww{e_3}{e_4}$ as before. If $P$ has eigenvalue $a+ib \in \mathbb{C}$, then
$$
\cS(\ww{e_3}{e_4}) = (a+ib)(\ww{e_3}{e_4}) = b (\ww{e_1}{e_2}) + a(\ww{e_3}{e_4}),
$$
so that
$$
S_{3412} = b \commas S_{3434} = -a \commas S_{3413} = S_{3414} = S_{3442} = S_{3423} = 0.
$$
Since Proposition \ref{prop:crit} holds with $\Ssec$ in place of $\Tsec$ and $S_{ijkl}$ in place of $K_{ijkl}$, to show that $P$ is a critical point of $\Ssec$ we must show that
$$
S_{1334} = S_{1434} = S_{4234} = S_{2334} = 0.
$$
Consider $S_{1334}$; via \eqref{eqn:Scomp} and Lemma \ref{lemma:crit2},
$$
S_{3413} = \frac{1}{2}(\,\underbrace{K_{3413}\,}_{\text{$-R_{3413}$}}+ \underbrace{K_{1242}}_{\text{$R_{1242}$}}\,) = 0 \imp R_{3413} = R_{1242}.
$$
Thus
$$
S_{1334} = \frac{1}{2}(\,\underbrace{K_{1334}\,}_{\text{$R_{1334}$}}+ \underbrace{K_{4212}}_{\text{$-R_{4212}$}}\,) = \frac{1}{2}(R_{1334} - R_{4212}) = 0.
$$
Similarly, $S_{1434} = 0$.  For $S_{4234}$,
$$
S_{3442} = \frac{1}{2}(\,\underbrace{K_{3442}\,}_{\text{$R_{3442}$}} - \underbrace{K_{1213}}_{\text{$-R_{1213}$}}\,) = 0 \imp R_{3442} = -R_{1213}.
$$
Thus
$$
S_{4234} = \frac{1}{2}(\,\underbrace{K_{4234}\,}_{\text{$R_{4234}$}} -\underbrace{K_{1312}}_{\text{$-R_{1312}$}}\,) = \frac{1}{2}(R_{4234} + R_{1312}) = 0.
$$
Similarly, $S_{2334} = 0$.  We conclude that any $\cS$-eigenvector $P \in G_+(p)$ that is orthogonal to $T$ is a critical point of $\Ssec$.  The case when $P \in G_-(p)$ contains $T$ is similar, but now with $S_{1312}=S_{1412}=S_{4212}=S_{2312}=0$.
\end{proof}

\section{The case where $g$ is Riemannian and $h$ is split-signature}
\label{sec:ssm}

We now consider metrics $h$ of signature $(-\!-\!++)$ (cf. \cite{CP, derdzinski}).  Note that, unlike the Lorentzian case, $M$ need not have Euler characteristic zero to support such metrics; e.g., we can take any pair of oriented closed Riemannian 2-manifolds $(M_1,g_1), (M_2,g_2)$ and define $h\defeq (-g_1)\oplus g_2$ on $M_1 \times M_2$.  Observe that $h$ and $g \defeq g_1\oplus g_2$ satisfy the following property:

\begin{defn}
Let $(M,g)$ be an oriented Riemannian 4-manifold.  A split-signature metric $h$ on $M$ is a \emph{deformation of $g$} if $g$ and $h$ have local orthonormal frames in common about every point of $M$.
\end{defn}

As for the Hodge  star operator $\hsh$ of a split-signature metric $h$, for any oriented local $h$-orthonormal frame $\{e_1,e_2,e_3,e_4\}$ with $e_1,e_2$ both timelike, the action of $\hsh$ on the corresponding basis \eqref{eqn:Hbasis0} for $\Lambda^2$ is
\beqa
\arraycolsep=1.4pt\def\arraystretch{1.5}
\left\{\begin{array}{lr}
\hsh(\ww{e_1}{e_2}) = +\ww{e_3}{e_4},\\
\hsh(\ww{e_1}{e_3}) = -\ww{e_4}{e_2},\\
\hsh(\ww{e_1}{e_4}) = -\ww{e_2}{e_3},
\end{array}\right.  \comma \arraycolsep=1.4pt\def\arraystretch{1.5}
\left\{\begin{array}{lr}
\hsh(\ww{e_3}{e_4}) = +\ww{e_1}{e_2},\\
\hsh(\ww{e_4}{e_2}) = -\ww{e_1}{e_3},\\
\hsh(\ww{e_2}{e_3}) = -\ww{e_1}{e_4},\nonumber
\end{array}\right.
\eeqa
or in matrix form,
{\small
\beqa
\label{eqn:Hhbasis}
\hsh = \begin{bmatrix}
0 & 0 & 0 & 1 & 0 & 0\\
0 & 0 & 0 & 0 & -1 & 0\\
0 & 0 & 0 & 0 & 0 & -1\\
1 & 0 & 0 & 0 & 0 & 0\\
0 & -1 & 0 & 0 & 0 & 0\\
0 & 0 & -1 & 0 & 0 & 0
\end{bmatrix}\cdot
\eeqa}(Cf. with \eqref{eqn:Hbasis} and \eqref{eqn:oring}.)  In particular, $\hsh$ is an involution: $\hsh^2 = +1.$  Being a deformation of $g$ has the following additional advantage:

\begin{lemma}
\label{lemma:last}
Let $(M,g)$ be an oriented Riemannian 4-manifold and $h$ a split-signature deformation of $g$.  Then \emph{$\hsh$} is self-adjoint with respect to \emph{$\ipr{\,}{}$}.
\end{lemma}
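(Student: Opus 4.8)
The plan is to exploit the fact that $h$ is a deformation of $g$, so that about any point there is a local frame $\{e_1,e_2,e_3,e_4\}$ that is simultaneously $g$-orthonormal and $h$-orthonormal (with $e_1,e_2$ being $h$-timelike). Using this frame and the induced basis \eqref{eqn:Hbasis0} for $\Lambda^2$, I would compare the matrix of $\hsh$ with the matrix of the Riemannian inner product $\ipr{\,}{}$, both expressed in the same basis. The key observation is that since the frame is $g$-orthonormal, the basis \eqref{eqn:Hbasis0} is $\ipr{\,}{}$-orthonormal, i.e.\ the Gram matrix of $\ipr{\,}{}$ in this basis is the $6\times 6$ identity $I_6$.

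The central computation is then purely linear-algebraic: a linear operator $\Phi\colon\Lambda^2\lra\Lambda^2$ is $\ipr{\,}{}$-self-adjoint if and only if its matrix with respect to a $\ipr{\,}{}$-orthonormal basis is symmetric. So I would simply read off the matrix \eqref{eqn:Hhbasis} of $\hsh$ in the common orthonormal basis and observe that it is symmetric (indeed each of the two off-diagonal $3\times3$ blocks is diagonal, hence symmetric, and the two blocks are transposes of one another). Therefore $\hsh$ is $\ipr{\,}{}$-self-adjoint. One should note that this argument is pointwise and frame-dependent in its presentation, but self-adjointness with respect to $\ipr{\,}{}$ is a basis-independent property, so establishing it in one convenient basis at each point suffices.

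Alternatively — and perhaps more cleanly, to avoid leaning on the explicit matrix — I would argue intrinsically: for $\xi,\eta\in\Lambda^2$, one has $\ww{\xi}{(\hsh\eta)} = \iph{\xi}{\eta}\,dV_h$ by definition of $\hsh$, and $\hsh$ is automatically $\iph{\,}{}$-self-adjoint (the standard fact, valid in any signature, since $\ww{\xi}{\hsh\eta} = \ww{\eta}{\hsh\xi}$ up to the symmetry of the wedge on $2$-forms). The remaining point is to relate $\iph{\,}{}$ to $\ipr{\,}{}$ on $\Lambda^2$: because $g$ and $h$ share an orthonormal frame $\{e_i\}$ with $e_1,e_2$ timelike for $h$, on the basis \eqref{eqn:Hbasis0} the two inner products differ only by signs, namely $\iph{\,}{}$ has signature $(-\!-\!-\!+\!++)$ on the ordered basis \eqref{eqn:Hbasis0} while $\ipr{\,}{}$ is positive-definite, with both diagonal. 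Then chasing self-adjointness through this sign change on each basis vector yields the claim.

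I expect the main obstacle to be bookkeeping the signs correctly: one must keep straight which $2$-vectors $\ww{e_i}{e_j}$ are $h$-timelike versus $h$-spacelike and check that the pattern of minus signs in the matrix of $\hsh$ is precisely the one that makes the matrix symmetric rather than merely ``self-adjoint with respect to the indefinite form.'' Concretely, $\ww{e_1}{e_3}$ and $\ww{e_1}{e_4}$ are $h$-timelike (product of a timelike and a spacelike vector), while $\ww{e_1}{e_2}$ is $h$-spacelike (product of two timelike vectors), and the three entries $\ww{e_3}{e_4},\ww{e_4}{e_2},\ww{e_2}{e_3}$ sort accordingly; verifying that this is exactly what produces the symmetric matrix \eqref{eqn:Hhbasis} is the one place where care is needed, but it is a finite check and not a genuine difficulty.
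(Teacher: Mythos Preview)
Your primary argument is correct and is essentially the paper's own proof: both work in a common $g$- and $h$-orthonormal frame and verify self-adjointness there, the paper by checking $\ipr{\hsh(\ww{e_1}{e_2})}{\ww{e_i}{e_j}} = \ipr{\ww{e_1}{e_2}}{\hsh(\ww{e_i}{e_j})}$ case by case, you by the equivalent observation that the matrix \eqref{eqn:Hhbasis} is symmetric in a $\ipr{\,}{}$-orthonormal basis. One minor slip in your alternative route: on the ordered basis \eqref{eqn:Hbasis0} the diagonal signs of $\iph{\,}{}$ are $(+,-,-,+,-,-)$, not $(-,-,-,+,+,+)$, though this does not affect your main argument.
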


\begin{proof}
Let $\{e_1,e_2,e_3,e_4\}$ be a $g$- and $h$-orthonormal frame, with $e_1,e_2$ $h$-timelike.  For $i < j$, observe that
$$
\ipr{\hsh(\ww{e_1}{e_2})}{\ww{e_i}{e_j}} = \left\{\begin{array}{ll}
0 & \text{if $\ww{e_i}{e_j} \neq \ww{e_3}{e_4}$,}\\
1 & \text{if $\ww{e_i}{e_j} = \ww{e_3}{e_4}$}.
\end{array}\right\} = \ipr{\ww{e_1}{e_2}}{\hsh(\ww{e_i}{e_j})}.
$$
The remaining cases are similar.
\end{proof}

(Note that $\co$ is not $\iph{\,}{}$-self-adjoint.) The  analogue of Definition \ref{def:*} is:

\begin{defn}[$\hsh$-Einstein metric]
\label{def:*h}
An oriented Riemannian 4-manifold $(M,g)$ is \emph{$\hsh$-Einstein} if the curvature operator of $g$ commutes with the Hodge star operator \emph{$\hsh$} of a split-signature metric $h$ that is a deformation of $g$.
\end{defn}

Although there is a direct analogue of Proposition \ref{prop:crit} in the split-signature case, by which one may prove an analogue of Theorem \ref{thm:n}, we forego doing so; rather,
using Lemma \ref{lemma:last}, we show that in the split-signature case there is a stronger notion of ``almost-Einstein" generalizing both $\Ric = \lambda g$ and \eqref{eqn:gsec_def}:

\begin{thm}[almost-Einstein metric, split-signature case]
\label{thm:sech}
Let $(M,g)$ be an oriented Riemannian 4-manifold and $h$ a split-signature metric on $M$ that is a deformation of $g$.  Then $(M,g)$ is \emph{$\hsh$}-Einstein if and only if
\beqa
\label{def:pEh}
\text{\emph{tr}}_{\scalebox{0.6}{h}}(\text{\emph{Rm}}_{\scalebox{0.6}{g}}) = f h,
\eeqa
where $\text{\emph{Rm}}_{\scalebox{0.6}{g}}$ is the Riemann curvature 4-tensor of $g$ and $f$ is a smooth function on $M$.  Equivalently, $(M,g)$ is \emph{$\hsh$-Einstein} if and only if
\beqa
\label{def:pEh2}
\text{\emph{$\gsec(\Ph) = \gsec(P)$}}
\eeqa
for all 2-planes $P$, where \emph{$\Ph \defeq \hsh P$}.
\end{thm}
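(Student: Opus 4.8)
The plan is to prove the two stated characterizations separately: the one via $\text{tr}_h$ by a direct computation in a frame common to $g$ and $h$, and the one via $\gsec$ by an operator argument resting on Lemma~\ref{lemma:last}.

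Fix $p\in M$ and pick a local frame $\{e_1,e_2,e_3,e_4\}$ that is simultaneously $g$- and $h$-orthonormal, with $e_1,e_2$ the $h$-timelike directions; this is possible precisely because $h$ is a deformation of $g$. In the induced basis \eqref{eqn:Hbasis0} for $\Lambda^2$, the matrix of $\hsh$ is \eqref{eqn:Hhbasis}, and that of $\co$ has the block form $\co=-\!\begin{bmatrix}A&B\\B^t&D\end{bmatrix}$ of \eqref{eqn:co2}, with $A,D$ symmetric. Writing $J\defeq\text{diag}(1,-1,-1)$, so that \eqref{eqn:Hhbasis} reads $\hsh=\begin{bmatrix}O&J\\J&O\end{bmatrix}$, I would multiply out $\co\circ\hsh$ and $\hsh\circ\co$ and read off
$$
\co\circ\hsh=\hsh\circ\co\iff D=JAJ\ \text{ and }\ B^t=JBJ,
$$
the split-signature analogue of \eqref{eqn:p1}. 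Translating these two matrix identities into relations among the components $R_{ijkl}$ of $\text{Rm}_g$ --- exactly as $A=D$, $B^t=-B$ were unwound into Ricci identities in the proof of Theorem~\ref{prop:psE} --- and then assembling the symmetric 2-tensor $\text{tr}_h(\text{Rm}_g)$, whose components in this frame are $\sum_i\epsilon_i R_{ijli}$ with $\epsilon_i\defeq h(e_i,e_i)$ (so $\epsilon_1=\epsilon_2=-1$, $\epsilon_3=\epsilon_4=+1$), one checks that $D=JAJ$ together with $B^t=JBJ$ is equivalent to: all off-diagonal components of $\text{tr}_h(\text{Rm}_g)$ vanish, and its four diagonal components are $-f,-f,f,f$ for a single function $f$. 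Since $h=\text{diag}(-1,-1,1,1)$ in this frame, that is exactly $\text{tr}_h(\text{Rm}_g)=f h$. This bookkeeping is the main obstacle --- it is routine, but demands care with the index and sign conventions, and is where essentially all of the labor lies.

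For the equivalence with \eqref{def:pEh2}, note first that by Lemma~\ref{lemma:last} the operator $\hsh$ is self-adjoint for the positive-definite inner product $\ipr{\,}{}$, and since $\hsh^2=+1$ it is therefore a $\ipr{\,}{}$-orthogonal involution. In particular it preserves $g$-norms, and --- using that $\hsr$ commutes with $\hsh$, which is immediate from \eqref{eqn:oring} and \eqref{eqn:Hhbasis}, together with $\hsh^2=1$ --- it sends decomposable 2-vectors to decomposable 2-vectors, since $\ipr{\hsr(\hsh\xi)}{\hsh\xi}=\ipr{\hsr\xi}{\xi}$ and $\xi$ is decomposable iff the latter vanishes. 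Hence $\Ph\defeq\hsh P$ is again a 2-plane and $\gsec(\Ph)$ is defined. Now if $g$ is $\hsh$-Einstein, then for every 2-plane $P$,
$$
\gsec(\Ph)=\ipr{\co(\hsh P)}{\hsh P}=\ipr{\hsh(\co P)}{\hsh P}=\ipr{\co P}{\hsh^2 P}=\ipr{\co P}{P}=\gsec(P),
$$
using $\co\circ\hsh=\hsh\circ\co$ and the self-adjointness of $\hsh$; this gives \eqref{def:pEh2}. (This is the split-signature version of the proof of Theorem~\ref{thm:secsec}, and it goes through more cleanly precisely because here $\hsh$ is an \emph{involution} self-adjoint for a \emph{positive-definite} inner product.)

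Conversely, suppose \eqref{def:pEh2} holds, and consider the $\ipr{\,}{}$-self-adjoint operator $T\defeq\co-\hsh\circ\co\circ\hsh$ on $\Lambda^2$. The display above shows $\ipr{TP}{P}=\gsec(P)-\gsec(\hsh P)$ for every unit decomposable $P$, so \eqref{def:pEh2} forces the quadratic form of $T$ to vanish on every decomposable 2-vector. As in the Singer--Thorpe theory \cite{thorpe2}, a $\ipr{\,}{}$-self-adjoint operator on $\Lambda^2$ whose quadratic form vanishes on all decomposables must be a scalar multiple of $\hsr$ --- the decomposables being the zero locus of the nondegenerate quadratic form $\xi\mapsto\ipr{\hsr\xi}{\xi}$. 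Hence $T=c\,\hsr$ for some smooth function $c$ on $M$. On the other hand $\hsh\circ T\circ\hsh=\hsh\circ\co\circ\hsh-\co=-T$, so $T$ \emph{anti}commutes with $\hsh$, whereas $c\,\hsr$ commutes with $\hsh$; therefore $c\,\hsr\hsh=-c\,\hsr\hsh$, and since $\hsr$ and $\hsh$ are invertible this forces $c\equiv0$. Thus $\co=\hsh\circ\co\circ\hsh$, i.e.\ $\co\circ\hsh=\hsh\circ\co$, so $g$ is $\hsh$-Einstein. Combined with the first two paragraphs, this closes the chain $\hsh\text{-Einstein}\Leftrightarrow\eqref{def:pEh}\Leftrightarrow\eqref{def:pEh2}$.
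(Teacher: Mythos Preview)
Your argument is correct and largely parallels the paper's. The first equivalence (with $\text{tr}_h(\text{Rm}_g)=fh$) is handled identically\,---\,your block-matrix shorthand $D=JAJ$, $B^t=JBJ$ is just a compact repackaging of the paper's explicit $6\times 6$ identity \eqref{eqn:same}\,---\,and the forward direction of the sectional-curvature equivalence is verbatim the paper's computation. You also take the extra care to verify that $\hsh P$ is again decomposable (so that $\gsec(\hsh P)$ is defined), a point the paper leaves implicit.

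The one genuine difference is in the converse of \eqref{def:pEh2}. The paper argues that both $\co$ and $\hsh\circ\co\circ\hsh$ satisfy the algebraic Bianchi identity with respect to $\ipr{\,}{}$\,---\,this is the crucial contrast with the Lorentzian case at \eqref{eqn:cont}\,---\,so equal quadratic forms on decomposables forces equality. You instead invoke the Singer--Thorpe fact that a self-adjoint operator with vanishing quadratic form on decomposables must be a multiple of $\hsr$, and then kill that multiple by observing that $T\defeq\co-\hsh\co\hsh$ anticommutes with $\hsh$ while $\hsr$ commutes with it. Your route is slightly longer but has the virtue of not needing to check that $\hsh\circ\co\circ\hsh$ satisfies Bianchi (which it does, since $\text{tr}(\hsh\co\hsh\cdot\hsr)=\text{tr}(\co\hsr)$ by the commutativity of $\hsh$ and $\hsr$ together with cyclicity of trace\,---\,but the paper does not spell this out). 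Either way the logic is sound.
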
 

\begin{proof}
With respect to an oriented $g$- and $h$-orthonormal frame $\{e_1,e_2,e_3,e_4\}$, with $e_1,e_2$ $h$-timelike, the $\hsh$-Einstein condition $\hsh \circ \co = \co \circ \hsh$, together with \eqref{eqn:co_sum} and \eqref{eqn:Hhbasis}, yields the following identity:
{\footnotesize
$$
\hspace{-.2in}\begin{bmatrix}
R_{1234} & R_{1334} & R_{1434} & R_{3434} & R_{4234} & R_{2334}\\
-R_{1242} & -R_{1342} & -R_{1442} & -R_{3442} & -R_{4242} & -R_{2342}\\
-R_{1223} & -R_{1323} & -R_{1423} & -R_{3423} & -R_{4223} & -R_{2323}\\
R_{1212} & R_{1312} & R_{1412} & R_{3412} & R_{4212} & R_{2312}\\
-R_{1213} & -R_{1313} & -R_{1413} & -R_{3413} & -R_{4213} & -R_{2313}\\
-R_{1214} & -R_{1314} & -R_{1414} & -R_{3414} & -R_{4214} & -R_{2314}
\end{bmatrix}
$$}
{\footnotesize
\beqa\label{eqn:same} \hspace{1in}= \begin{bmatrix}
R_{3412} & -R_{4212} & -R_{2312} & R_{1212} & -R_{1312} & -R_{1412}\\
R_{3413} & -R_{4213} & -R_{2313} & R_{1213} & -R_{1313} & -R_{1413}\\
R_{3414} & -R_{4214} & -R_{2314} & R_{1214} & -R_{1314} & -R_{1414}\\
R_{3434} & -R_{4234} & -R_{2334} & R_{1234} & -R_{1334} & -R_{1434}\\
R_{3442} & -R_{4242} & -R_{2342} & R_{1242} & -R_{1342} & -R_{1442}\\
R_{3423} & -R_{4223} & -R_{2323} & R_{1223} & -R_{1323} & -R_{1423}
\end{bmatrix}\cdot
\eeqa}(Here $R_{ijkl} \defeq \text{Rm}_{\scalebox{0.6}{\emph{g}}}(e_i,e_j,e_k,e_l)$ as usual.) Setting $H \defeq \text{tr}_{\scalebox{0.6}{\emph{h}}}(\text{Rm}_{\scalebox{0.6}{\emph{g}}})$, observe that \eqref{eqn:same} is in turn equivalent to the off-diagonal terms of $H$ vanishing,
\beqa
\arraycolsep=1.4pt\def\arraystretch{1.5}
\left\{\begin{array}{lr}
H_{12} = R_{3123} + R_{4124} = 0,\nonumber\\
H_{13} = -R_{2132} + R_{4134} = 0,\nonumber\\
H_{14} = -R_{2142} + R_{3143} = 0,\nonumber\\
H_{23} = -R_{1231} + R_{4234} = 0,\nonumber\\
H_{24} = -R_{1241} + R_{3243} = 0,\nonumber\\
H_{34} = -R_{1341} - R_{2342} = 0,\nonumber
\end{array}\right.
\eeqa
together with all of its diagonal terms being equal up to the sign of $h_{ii}$:
\beqa
\arraycolsep=1.4pt\def\arraystretch{1.5}
\left\{\begin{array}{lr}
H_{22} = -R_{1221} + \underbrace{\,R_{3223}\,}_{\text{$R_{4114}$}}+\underbrace{\,R_{4224}\,}_{\text{$R_{3113}$}} = H_{11},\nonumber\\
H_{33} = -R_{1331}-R_{2332}+\underbrace{\,R_{4334}\,}_{\text{$R_{2112}$}} = -H_{11},\nonumber\\
H_{44} = -R_{1441}-R_{2442}+R_{3443} = -H_{11}.\nonumber
\end{array}\right.
\eeqa
Finally, since we know by Lemma \ref{lemma:last} that $\hsh$ is also self-adjoint with respect to $g$'s inner product $\ipr{\,}{}$ on $\Lambda^2$, we have, for  any  2-plane $P$,
$$
\gsec(\hsh P) = \ipr{\co(\hsh P)}{\hsh P} = \ipr{\hsh(\co P)}{\hsh P} = \ipr{\co P}{P} = \gsec(P).
$$
Conversely, $\gsec(\hsh P) = \gsec(P)$ implies that the operators $\hsh \circ\co \circ \hsh$ and $\co$ have equal $\ipr{\,}{}$-quadratic forms.  Since now \emph{both} satisfy the algebraic Bianchi identity, they must be equal (cf.~\eqref{eqn:cont}), hence $g$ is $\hsh$-Einstein.
\end{proof}

Before proceeding to the next case, let us pause to give a ``bird's-eye view" of the differences between the Lorentzian and split-signature cases:
\begin{enumerate}[leftmargin=*]
\item[i.] The split-signature case yields a more satisfactory notion of ``almost-Einstein" metric (Theorem \ref{thm:sech}) than the Lorentzian case (Theorem \ref{thm:secsec}).  This is because $\hsh$ is self-adjoint with respect to $\ipr{\,}{}$, but $\hsl$ is not.
\item[ii.] On the other hand, the Lorentzian case yields a more satisfactory relation to the critical points of $\gsec$ (Theorem \ref{thm:n}).  This is because complexifying $\Lambda^2$ via $\hsl^2=-1$ allowed us to directly relate certain critical points of $\gsec$ to the (complex) eigenvectors of $\co\colon \cx \lra \cx$ (recall \eqref{eqn:thorpecrit}), and thus to $g$'s Petrov Type.  No such complexification exists in the split-signature case.  Moreover, although in the split-signature case $\co \circ \hsh = \hsh \circ \co$ yields a $\ipr{\,}{}$-orthonormal basis of eigenvectors for both $\co$ \emph{and} $\hsh$, these needn't yield 2-planes as they do in the purely Riemannian setting, and which \eqref{eqn:thorpecrit} requires (whereas any sum of unit self- and anti-self-dual eigenvectors of $\hsr$ is necessarily decomposable, hence a 2-plane (see \cite{thorpe2}), this is not true with the eigenvectors of $\hsh$).
\end{enumerate}

\section{The case where $g$ is Lorentzian and $h$ is Riemannian}
\label{sec:L}
Let us now consider an oriented Lorentzian 4-manifold $(M,\gL)$, a unit timelike vector field $T$, and a \emph{Riemannian} deformation $h \defeq \gL + 2T^{\flat} \otimes T^{\flat}$, where $T^{\flat} \defeq \gL(T,\cdot)$.  Here, instead of $\gL$'s curvature operator $\col$, let us work instead with the endomorphism $\cooh\colon \Lambda^2\lra \Lambda^2$ defined by
\beqa
\label{eqn:lasth}
R_{ijkl} \defeq -\iph{\cooh(\ww{e_i}{e_j})}{\ww{e_k}{e_l}},
\eeqa
where $R_{ijkl}$ are the components of the Riemann curvature 4-tensor $\text{Rm}_{\scalebox{0.4}{\emph{L}}}$ of $\gL$; by construction, $\cooh$ is $\iph{\,}{}$-self-adjoint.  In terms of a $\gL$- and $h$-orthonormal frame $\{T\defeq e_1,e_2,e_3,e_4\}$, the difference between $\cooh$ and $\col$ is that
{\tiny
$$
\hspace{-.2in}\col = -\!\begin{bmatrix}
-R_{1212} & -R_{1312} & -R_{1412} & -R_{3412} & -R_{4212} & -R_{2312}\\
-R_{1213} & -R_{1313} & -R_{1413} & -R_{3413} & -R_{4213} & -R_{2313}\\
-R_{1214} & -R_{1314} & -R_{1414} & -R_{3414} & -R_{4214} & -R_{2314}\\
R_{1234} & R_{1334} & R_{1434} & R_{3434} & R_{4234} & R_{2334}\\
R_{1242} & R_{1342} & R_{1442} & R_{3442} & R_{4242} & R_{2342}\\
R_{1223} & R_{1323} & R_{1423} & R_{3423} & R_{4223} & R_{2323}
\end{bmatrix},
$$}
{\tiny
\beqa\label{eqn:RR}\hspace{1in}\text{whereas}~\cooh = -\!\begin{bmatrix}
R_{1212} & R_{1312} & R_{1412} & R_{3412} & R_{4212} & R_{2312}\\
R_{1213} & R_{1313} & R_{1413} & R_{3413} & R_{4213} & R_{2313}\\
R_{1214} & R_{1314} & R_{1414} & R_{3414} & R_{4214} & R_{2314}\\
R_{1234} & R_{1334} & R_{1434} & R_{3434} & R_{4234} & R_{2334}\\
R_{1242} & R_{1342} & R_{1442} & R_{3442} & R_{4242} & R_{2342}\\
R_{1223} & R_{1323} & R_{1423} & R_{3423} & R_{4223} & R_{2323}
\end{bmatrix}\cdot
\eeqa}

(In particular, $\cooh$ is symmetric.) The corresponding notion of ``almost-Einstein" metric in this case is the ``mirror image" of Definition \ref{def:*}:
\begin{defn}[$\hsh$-Einstein metric]
\label{def:*h2}
Let \emph{$(M,\gL)$} be an oriented Lorentzian 4-manifold, $T$ a unit timelike vector field on $M$, and $h$ the Riemannian metric \emph{$h \defeq \gL+2 T^\flat \otimes T^\flat$}. Then \emph{$(M,\gL)$} is \emph{$\hsh$-Einstein} if the operator \emph{$\cooh\colon \Lambda^2\lra \Lambda^2$} defined by \eqref{eqn:lasth} commutes with the Hodge star operator \emph{$\hsh$} of $h$.
\end{defn}
This definition generalizes the notion of gravitational Petrov Type:

\begin{thm}[Normal Form for Lorentzian $\hsh$-Einstein metrics]
\label{thm:last}
An oriented Lorentzian 4-manifold \emph{$(M,\gL)$} is \emph{$\hsh$}-Einstein with respect to the Riemannian metric \emph{$h \defeq \gL+2 T^\flat \otimes T^\flat$} if and only if
\beqa
\label{eqn:RRR}
\text{\emph{tr}}_{\scalebox{0.6}{h}}(\text{\emph{Rm}}_{\scalebox{0.4}{L}}) = f h,
\eeqa
where $\text{\emph{Rm}}_{\scalebox{0.4}{L}}$ is the Riemann curvature 4-tensor of \emph{$\gL$} and $f$ is a smooth function on $M$.  Any \emph{$\hsl$}-Einstein manifold \emph{$(M,\gL)$} has a unique normal form that is completely determined by the critical points and values of the quadratic form \emph{$P \mapsto \iph{\cooh P}{P}$, defined for all 2-planes $P$}.
\end{thm}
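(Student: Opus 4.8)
\emph{Proof proposal.} I would prove the two assertions in turn: the equivalence with \eqref{eqn:RRR} by a $6\times6$ block computation parallel to those in the proofs of Theorems \ref{prop:psE} and \ref{thm:sech}, and the normal-form statement by reducing to the classical Singer--Thorpe theorem \cite{thorpe2}, once $\cooh$ is recognized as satisfying, relative to $h$, exactly the algebraic hypotheses that the curvature operator of an Einstein $4$-manifold satisfies relative to itself.

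For \eqref{eqn:RRR}, fix $p\in M$ and work in an oriented $\gL$-orthonormal frame $\{T\defeq e_1,e_2,e_3,e_4\}$, which by construction of $h$ is simultaneously $h$-orthonormal with $h(T,T)=1$. In the associated basis \eqref{eqn:Hbasis0} of $\Lambda^2$ the matrix of $\cooh$ is, in the block notation of \eqref{eqn:co2}, the matrix $-\!\begin{bmatrix}A&B\\ B^{t}&D\end{bmatrix}$ read off from \eqref{eqn:RR}, with $A$ and $D$ symmetric by the pair symmetry of $\RmL$, while the matrix of $\hsh$ is $\begin{bmatrix}O&I\\ I&O\end{bmatrix}$ exactly as in \eqref{eqn:oring}, since the frame is $h$-orthonormal and $h$ is Riemannian. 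A direct multiplication gives
\[
\cooh\circ\hsh=\hsh\circ\cooh \quad\Longleftrightarrow\quad A=D \ \text{and}\ B=B^{t},
\]
which is precisely the linear condition \eqref{eqn:p2} that, in the purely Riemannian setting, characterizes the Einstein equation --- now imposed on $\RmL$ rather than $\text{Rm}_g$. It then remains to translate these relations into $\text{tr}_{h}(\RmL)=fh$: writing $H\defeq\text{tr}_{h}(\RmL)$ and expanding $H_{jk}=\sum_i(\RmL)_{ijki}$ in this frame (all metric signs being $+1$ since $h$ is Riemannian), the same bookkeeping as in the proofs of Theorems \ref{prop:psE} and \ref{thm:sech}, using the curvature symmetries of $\RmL$ together with its first (algebraic) Bianchi identity, shows that $A=D$ and $B=B^{t}$ hold if and only if the off-diagonal components of $H$ vanish and its diagonal components are all equal, i.e.\ if and only if $H=fh$ with $f\defeq H_{11}$. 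As in the four-dimensional Einstein case this chain of equivalences is reversible, giving \eqref{eqn:RRR} in both directions.

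For the normal-form statement the decisive observation is that $\cooh$ is not only $\iph{\,}{}$-self-adjoint --- which holds by $R_{ijkl}=R_{klij}$, as noted after \eqref{eqn:lasth} --- but also satisfies the algebraic Bianchi identity with respect to $\iph{\,}{}$. Indeed, both $(X,Y,Z,W)\mapsto-\iph{\cooh(\ww{X}{Y})}{\ww{Z}{W}}$ and $(X,Y,Z,W)\mapsto\RmL(X,Y,Z,W)$ are $4$-linear and antisymmetric in $(X,Y)$ and in $(Z,W)$, and they agree on $\{e_1,e_2,e_3,e_4\}$ by \eqref{eqn:lasth}; hence they agree identically, so in \emph{every} $h$-orthonormal frame the $\iph{\,}{}$-components of $\cooh$ equal $-(\RmL)_{ijkl}$, which obeys the first Bianchi identity. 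Since $h$ is Riemannian, $\iph{\,}{}$ is positive definite, so $\cooh$ is orthogonally diagonalizable over $\RR$; there is no room for Jordan blocks, and hence --- in contrast to the Lorentzian-$h$ setting of Definition \ref{def:PT2}, where $\hsh$ fails to be an involution --- a single normal form. When $(M,\gL)$ is $\hsh$-Einstein, $\cooh$ moreover commutes with the honest involution $\hsh$ ($\hsh^{2}=+1$), hence is block-diagonal with respect to $\Lambda^2=\Lambda^+_h\oplus\Lambda^-_h$; combined with $\iph{\,}{}$-self-adjointness and the algebraic Bianchi identity, this places $\cooh$ exactly under the hypotheses of the Singer--Thorpe analysis, with $h$ in the role of the metric and $P\mapsto\iph{\cooh P}{P}$ on the Grassmannian of $2$-planes in the role of the sectional curvature. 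Its conclusion is the desired one: $\cooh$, i.e.\ its unique normal form, is completely determined by the critical points and critical values of $P\mapsto\iph{\cooh P}{P}$.

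The main obstacle is the bookkeeping in the first assertion --- deriving $\text{tr}_{h}(\RmL)=fh$ from $A=D$, $B=B^{t}$ and running the argument backwards --- where the algebraic Bianchi identity of $\RmL$ must be used to collapse the scalar relations of the $6\times6$ matrix equation onto the conditions encoded by $H=fh$; this is routine but delicate. The conceptual heart of the second assertion, the Singer--Thorpe normal-form theorem, is cited rather than reproved, so the real work there is only the short verification that the Lorentzian origin of $\cooh$ is immaterial and that Singer--Thorpe's algebraic hypotheses genuinely hold relative to the Riemannian metric $h$.
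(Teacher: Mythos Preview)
Your proposal is correct and follows essentially the same route as the paper: for \eqref{eqn:RRR} the paper likewise reduces $\cooh\circ\hsh=\hsh\circ\cooh$ to $A=D$, $B=B^{t}$ via \eqref{eqn:RR} and the block form $\hsh=\begin{bmatrix}O&I\\ I&O\end{bmatrix}$, then reads off $\text{tr}_{h}(\RmL)=fh$ exactly as in Theorem~\ref{thm:sech}; for the normal form the paper also cites Singer--Thorpe \cite{thorpe2}, checking that (i) critical points of $P\mapsto\iph{\cooh P}{P}$ are characterized by $\cooh P=aP+b(\hsh P)$, (ii) $\cooh$ and $\hsh$ are $\iph{\,}{}$-self-adjoint, and (iii) every 2-plane is $\tfrac{1}{\sqrt{2}}(\alpha+\beta)$ with $\alpha,\beta$ unit self-/anti-self-dual. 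Your explicit verification that $\cooh$ satisfies the algebraic Bianchi identity with respect to $\iph{\,}{}$ is a useful addition --- it is precisely what underlies condition~(i) and what distinguishes this situation from the obstruction discussed after \eqref{eqn:cont}.
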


\begin{proof}
\eqref{eqn:RRR} follows from \eqref{eqn:RR} and $\hsh = {\tiny \begin{bmatrix}
O & I\\
I & O
\end{bmatrix}}$, as \eqref{def:pEh} did from \eqref{eqn:same} in Theorem \ref{thm:sech}.  Regarding $(M,\gL)$'s normal form, this follows by the same proof as in \cite[Theorems~2.1 \& 2.2]{thorpe2}.  Indeed, to utilize those proofs, three conditions must hold: i.) The critical points of $P \mapsto \iph{\cooh P}{P}$ would need to be characterized by $\cooh P = aP+b(\hsh\hspace{.01in}P)$ (one can show this as in Proposition \ref{prop:crit}); ii.)~$\cooh$ and $\hsh$ would need to be self-adjoint with respect to the (positive-definite) inner product $\iph{\,}{}$, which they are; iii.)~any 2-plane would need to be expressible as $\frac{1}{\sqrt{2}}(\alpha + \beta)$, where $\alpha$ and $\beta$ are, respectively, unit self-dual and anti-self-dual eigenvectors of $\hsh$.  This is also the case.
\end{proof}

``Normal form" means that at each $p\in M$, there is an $h$-orthonormal frame $\{e_1,e_2,e_3,e_4\}$ such that each 2-plane $\ww{e_i}{e_j}$ in the corresponding basis \eqref{eqn:Hbasis0} for $\Lambda^2$, which we may write in the form $\{P_1,P_2,P_3,{P_1}^{\!\perp_{\scalebox{0.5}{\emph{h}}}},{P_2}^{\!\perp_{\scalebox{0.5}{\emph{h}}}},{P_3}^{\!\perp_{\scalebox{0.5}{\emph{h}}}}\}$, is a critical point of $P \mapsto \iph{\cooh P}{P}$, hence satisfies $\cooh P_i = \lambda_i P_i + \mu_i{P_i}^{\perp_{\scalebox{0.5}{\emph{h}}}}$. Thus
{\tiny
$$
\cooh = -\!\begin{bmatrix}
\lambda_1 & 0 & 0 & \mu_1 & 0 & 0\\
0 & \lambda_2 & 0 & 0 & \mu_2 & 0\\
0 & 0 & \lambda_3 & 0 & 0 & \mu_3\\
\mu_1 & 0 & 0 & \lambda_1 & 0 & 0\\
0 & \mu_2 & 0 & 0 & \lambda_2 & 0\\
0 & 0 & \mu_3 & 0 & 0 & \lambda_3
\end{bmatrix}\cdot
$$}(If we could arrange it so that $T \defeq e_1$, then $\col$ would also inherit this normal form, via \eqref{eqn:RR}.) In  general, this normal form exists for $\hat{S}^{\scalebox{0.5}{\emph{h}}}_{\scalebox{0.4}{\emph{L}}} \defeq \frac{1}{2}(\cooh + \hsh \circ \cooh \circ \hsh)$ (which is $\iph{\,}{}$-self-adjoint), as it also would if $g$ and $h$ were both Riemannian (if they had orthonormal frames in common, then \eqref{eqn:RRR} would hold, too).
\begingroup
\bibliographystyle{alpha}
\bibliography{Hodge_star-gL}
\endgroup
\end{document}